\numberwithin{equation}{section}
\numberwithin{figure}{section}
\theoremstyle{plain}
\newtheorem{thm}{Theorem}[section]
\newtheorem{lem}[thm]{Lemma}
\newtheorem{conj}[thm]{Conjecture}
\newtheorem{defn}[thm]{Definition}
\newtheorem{prop}[thm]{Proposition}
\newtheorem{cor}[thm]{Corollary}
\newtheorem{rmk}[thm]{Remark}
\renewcommand{\AA}{\mathcal{A}}
\newcommand{\BB}{\mathcal{B}}
\newcommand{\cc}{\mathcal{C}}
\newcommand{\dd}{\mathcal{D}}
\newcommand{\FF}{\mathcal{F}}
\newcommand{\HH}{\mathcal{H}}
\newcommand{\KK}{\mathbb{K}}
\newcommand{\pp}{\mathbb{P}}
\newcommand{\QQ}{\mathcal{Q}}
\newcommand{\WW}{\mathcal{W}}
\newcommand{\MM}{\mathcal{M}}
\newcommand{\NN}{\mathcal{N}}
\newcommand{\CC}{\mathbb{C}}
\newcommand{\RR}{\mathbb{R}}
\newcommand{\TT}{\mathcal{T}}
\newcommand{\VV}{\mathcal{V}}
\newcommand{\YY}{\mathcal{Y}}
\newcommand{\ZZ}{\mathbb{Z}}
\newcommand{\zz}{\mathcal{Z}}
\renewcommand{\o}{\omega}
\newcommand{\mo}{(M,\o)}
\newcommand{\ainf}{A_{\infty}}
\newcommand{\id}{\operatorname{id}}
\newcommand{\End}{\operatorname{End}}
\begin{document}

\title{Projective twists in $A_{\infty}$-categories}
\author{Richard M. Harris}
\address{Centre for Mathematical Sciences, DPMMS, Wilberforce Road, Cambridge,
CB3 0WB.}
\email{r.harris@dpmms.cam.ac.uk}

\begin{abstract}
Given a Lagrangian $V \cong \CC\pp^n$ in a symplectic manifold $\mo$, there is an associated symplectomorphism $\phi_V$ of $M$.  We define the notion of a $\CC\pp^n$-object in an $\ainf$-category $\AA$ and use this to construct algebraically an $\ainf$-functor $\Phi_V$ and prove that it induces an autoequivalence of the derived category $D\AA$.  We conjecture that $\Phi_V$ corresponds to the action of $\phi_V$ and prove this in the lowest dimension $n=1$.  The construction is designed to be mirror to a construction of Huybrechts and Thomas.
\end{abstract}

\maketitle

\section{Introduction}
\label{sec:intro}
Although this paper is a piece of pure algebra, it is motivated by a particular
construction in symplectic topology.  Of central importance in modern
symplectic topology is the idea that we should associate to a symplectic
manifold $(M,\o)$ an $\ainf$-category called its \emph{Fukaya category} $\FF(M,\o)$.
Very briefly, $\FF(M,\o)$ should have as objects Lagrangian submanifolds $L_i$ (with some extra data), the
hom-spaces $\hom_{\FF(M,\o)}(L_0,L_1)$ should be Floer cochain groups, generated by the intersections of
$L_0$ and $L_1$, and the $\ainf$-maps should count holomorphic polygons with
certain Lagrangian boundary conditions.  In full generality, this construction
cannot always be completely carried though \cite{fooo1}, but can be made fully rigorous in certain cases, see for example
\cite{seidelbible}.

One of the fundamental principles behind this construction is that automorphisms of $\mo$ should induce auto-equivalences of $\FF \mo$; more specifically there should be a canonical map
\begin{equation}
\label{eqn:symptofuk}
Aut^c \mo/Ham \mo \to Auteq(D\FF \mo)/\langle [1] \rangle,
\end{equation}
where on the left $Aut^c$ is some subgroup of the full automorphism group that preserves the extra structure needed to define the Fukaya category (such as the homotopy class of the trivialization of the bicanonical bundle $K_M^{\otimes2}$) and we quotient out by the group of Hamiltonian symplectomorphisms, and on the right we quotient out by the shift autoequivalence.  Here $D\FF \mo$ is the so-called \emph{derived Fukaya category}, a triangulated category which is obtained from $\FF \mo$ by a purely algebraic process.  This construction as well as the other relevant algebraic background material will be covered in Section \ref{sec:ainf}.

The best demonstration to date of this principle comes from Dehn twists: given a Lagrangian sphere $V \subset M$ with a choice of diffeomorphism $f\colon V \to S^n$, there is a symplectomorphism $\tau_V$ called the \emph{Dehn twist} about $V$ \cite{seidelles} (the definition of $\tau_V$ requires certain choices, but the result is well-defined in $Aut^c \mo /Ham \mo$).  Algebraically, we can also define the notion of a spherical object $V$ in an $\ainf$-category $\AA$ and define a related functor $T_V \colon D\AA \to D\AA$.  Seidel \cite{seidelbible,seidelles} has proven that, given another Lagrangian $L$, $\tau_V L$ and $T_V L$ give rise to isomorphic objects in $D\FF \mo$ (here we are slightly abusing notation by letting $L$ denote both a submanifold of $M$ and an object of $\FF(M,\o)$). 

The existence of Dehn twists relies on the fact that the geodesic flow on the
round sphere is periodic, and there is a related construction that defines ``twist'' maps for any Lagrangian submanifold admitting such a metric \cite{seidelgradings}.  In particular, this
paper will be concerned with the projective twist $\phi_V$ associated to a Lagrangian $ V \cong \CC
\pp^n$.  

In Section \ref{sec:twists}, we shall define the notion of a $\CC\pp^n$-object in an $\ainf$-category and in the case where $\AA$ is a triangulated $\ainf$-category (see Section \ref{sec:ainf}) we use $V$ to define a functor $\Phi_V \colon \AA \to \AA$.  In Section \ref{sec:equivalence} we prove that $\Phi_V$ induces an auto-equivalence of $D\AA$.  This result is the first step towards proving the following conjecture:

\begin{conj}
\label{thm:conj}
Given a Lagrangian $V \cong \CC\pp^n$ and another Lagrangian $L$ in $\FF \mo$, $\phi_V L$ and $\Phi_V L$ give rise to isomorphic objects in $D\FF \mo$. 
\end{conj}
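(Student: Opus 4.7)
The strategy parallels Seidel's proof of the Dehn twist correspondence $\tau_V L \simeq T_V L$ in $D\FF\mo$.  His argument constructs a geometric cofiber sequence
\[
HF^*(V,L) \otimes V \to L \to \tau_V L
\]
via a local Lefschetz fibration having $V$ as a vanishing cycle, and identifies it term-by-term with the algebraic cone defining $T_V L$.  The plan is to find the analogous geometric cofiber (iterated-cone) description of $\phi_V L$ and match it with the algebraic construction of $\Phi_V L$ from Section \ref{sec:twists}.

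First, I would unpack the algebraic structure of $\Phi_V L$.  The Floer complex $CF^*(V,L)$ carries an action of $CF^*(V,V)$, which at the cohomology level is a module over $H^*(\CC\pp^n;k) = k[h]/(h^{n+1})$.  One expects $\Phi_V L$ to arise as the total complex of an iterated cone of length $n+1$ built from successive applications of this module structure, and writing this out explicitly indicates which geometric operations must be matched.

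Second, I would construct a local geometric model for $\phi_V$.  Unlike the Dehn twist, $\phi_V$ does not arise from an ordinary Lefschetz fibration, reflecting the more complicated topology of the geodesic flow on $(\CC\pp^n, g_{FS})$.  A natural candidate is a Morse-Bott degeneration, or a fibration over a disc with a single critical stratum isomorphic to $V$, whose monodromy around the origin is $\phi_V$; a careful surgery description of the resulting vanishing-cycle data should produce an iterated Lagrangian surgery realizing $\Phi_V L$.  Once this model is in place the correspondence reduces to a comparison of holomorphic polygon counts.

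The main obstacle will be rigorous control of the higher $\ainf$-maps arising from these moduli spaces: transversality, compactness, and the polynomial (rather than linear) character of the $\CC\pp^n$-action make the identification intricate, since many cone stages must be tracked simultaneously.  In the lowest case $n=1$, $V \cong S^2$ is simultaneously a spherical object and a $\CC\pp^1$-object, so the geometric relationship between $\phi_V$ and $\tau_V$ together with Seidel's theorem should supply enough additional structure to carry the argument through without invoking the full technology sketched above.
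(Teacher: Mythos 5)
You are addressing a statement that the paper itself does not prove: it is labelled a conjecture, and the author explicitly says a full proof ``would likely require a substantial further analysis.'' Your speculation about a Morse--Bott degeneration or a fibration with critical stratum $V$ is of the same tentative character as the paper's own remarks and is not carried out there either, so on the general-$n$ front there is nothing to compare. One correction is worth making, though: $\Phi_V L$ is \emph{not} an iterated cone of length $n+1$. The construction in Section~\ref{sec:twists} always produces a double cone (a three-term twisted complex $\hom(V,Y)\otimes V \to \hom(V,Y)[1]\otimes V \to Y$), and the $\KK[h]/h^{n+1}$-module structure enters only through the degree-two map $H$; the length of the complex does not grow with $n$. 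Building your matching strategy on an $(n+1)$-step cone would send you looking for the wrong geometric object.

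For $n=1$ you correctly sense that the route is through the spherical twist and Seidel's theorem, and this is indeed what the paper does --- but you leave the two load-bearing ingredients unnamed, and one of them is where all the work lives. The geometric input is the precise identity $\tau_V^2 = \phi_V$ in $Aut^c\mo/Ham\,\mo$ (equation~\eqref{eqn:symp}). Combined with Seidel's $\tau_V L \cong T_V L$, this reduces the conjecture to a purely algebraic claim: that $T_V^2$ and $\Phi_V$ are isomorphic as functors on $D\AA$. That claim is not ``additional structure'' that comes for free; it is the content of Section~\ref{sec:spheres}. The paper writes $T_V^2\YY$ out explicitly as a four-summand module, defines a natural map $\alpha_{\YY}\colon T_V^2\YY \to \Phi_V\YY$ by hand (using the splitting of $\VV(V)$ by the basis $\{e_V,h\}$ and a projection onto the $h$-component), and then verifies that $\alpha_{\YY}$ is $\mu^1_{\QQ}$-closed, is a quasi-isomorphism (via the same projection argument as in Proposition~\ref{thm:shift}), and intertwines the functors' action on morphisms. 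Your proposal stops exactly at the point where this comparison must be produced, so as written it has a genuine gap even in the tractable $n=1$ case: you would need to state $\tau_V^2=\phi_V$ and then prove $T_V^2\cong\Phi_V$ algebraically before the chain of isomorphisms closes.
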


We stress that a proof of this conjecture would likely require a substantial further analysis: for the parallel argument required to bridge the gap in the spherical case, see \cite{seidelbible}.  We can however verify this conjecture in the case of a $\CC\pp^1$-twist by exploiting the relation 
\begin{equation}
\label{eqn:symp}
\tau_V^2 = \phi_V
\end{equation}
in $Aut^c \mo/Ham \mo$.  Combining this with Seidel's result on spherical twists means that in this case we need only show that $\Phi_V$ and $T^2_V$ give isomorphic functors on $D\FF(M,\o)$.  This is proven in Section \ref{sec:spheres}, where we also show that there exist symplectic manifolds containing a Lagrangian $V$ with $H^*(V) \cong H^*(\CC\pp^n)$ where we can still define $\Phi_V$, but such that this functor has no preimage under \eqref{eqn:symptofuk}, so that $\Phi_V$ has no geometric representative.

Related results to those of this paper of been obtained by Huybrechts and Thomas \cite{huybrechtsthomas}, who introduce the notion of $\pp^n$-objects and $\pp^n$-twist functors for the derived category $D(X)$ of some smooth projective variety $X$.  Our construction is modelled on theirs and our results should be thought of as being ``mirror'' versions.

Returning to symplectic geometry, similar twist maps exist for Lagrangian $\RR\pp^n$s and $\mathbb{H}\pp^n$s since they are compact symmetric spaces of rank one and so admit metrics whose geodesic flow is periodic \cite{besse}.  The results of this paper can easily be reinterpreted in these contexts: we leave it to the interested reader the make the necessary minor adjustments (although we do remark that in the case of $\RR\pp^n$ one has to work in characteristic 2 to avoid sign issues).  The key fact is that $\RR\pp^n$ and $\mathbb{H}\pp^n$ both have cohomology rings which are truncated polynomial algebras (again only in characteristic 2 for $\RR\pp^n$) - indeed, this is necessary for the geodesic flow to be periodic by a theorem of Bott \cite{bott}.

There is an interesting algebraic counterpart to this observation: the construction of \cite{huybrechtsthomas} has been
extended by Grant to give a great many autoequivalences of derived categories \cite{grant}.  He works in the setting of the derived category $D^b(A)$ of modules over a finite dimensional symmetric $k$-algebra $A$ and proves that, given $P$ a projective $A$-module whose endomorphism algebra $\End_A(P)^{op}$ is \emph{periodic}, then there is a related autoequivalence $\Psi_P$ of $D^b(A)$.  This includes the case when the endomorphism algebra is a truncated polynomial ring, but is more general.  It would be interesting to try to understand if there is any geometric motivation for the other autoequivalences that Grant constructs.

\subsection*{Acknowledgements} I would like to thank my PhD supervisor Ivan Smith for suggesting this project and for many helpful discussions.  I would also like to thank Richard Thomas for pointing out several errors in an earlier draft of this paper.  During this research I was partially supported by European Research Council grant ERC-2007-StG-205349. 

\section{\texorpdfstring{$\ainf$-categories}{A-infinity categories}}
\label{sec:ainf}
Here we recall the basic background material on $\ainf$-categories that we
shall need.  Sign conventions differ throughout the literature, but all our
signs and notation come from \cite{seidelbible}, to which we direct the reader who finds the treatment in this section too brief.

\subsection{Categories}
Fix some coefficient field $\KK$.  An $\ainf$-category $\AA$ consists of a set of objects $Ob \AA$ as well as a
finite-dimensional $\ZZ$-graded $\KK$-vector space $\hom_{\AA}(X,Y)$ for any pair of objects $X,Y$, and
composition maps $(\mu_{\AA}^d)_{d\geq 1}$,
\[
\mu_{\AA}^d \colon \hom_{\AA}(X_{d-1}, X_d) \otimes \cdots \otimes
\hom_{\AA}(X_0, X_1)  \to \hom_{\AA}(X_0, X_d)[2-d],
\]
which satisfy the $\ainf$-relations
\begin{equation}
\label{eqn:ainf}
\sum_{m,n} (-1)^{\kreuz_n} \mu_{\AA}^{d-m+1} (a_d, \ldots, a_{n+m+1},
\mu_{\AA}^m (a_{n+m}, \ldots, a_{n+1} ), a_n, \ldots, a_1) =0.
\end{equation}
Here $\kreuz_n = |a_1| + \cdots + |a_n| - n$ and by $[k]$ we mean a shift in
grading \emph{down} by $k$.

The opposite category of $\AA$, denoted $\AA^{opp}$, has the same objects as $\AA$ and
$\hom_{\AA^{opp}}(X,Y) = \hom_{\AA}(Y,X)$, but composition is reversed:
\[
\mu_{\AA^{opp}}^d (a_d, \ldots, a_1) = (-1)^{\kreuz_d} \mu_{\AA}^d (a_1,\ldots, a_d).
\]
The $\ainf$-relations in particular mean that $\mu_{\AA}^1 (\mu_{\AA}^1 (\cdot))=0$
so we can consider the cohomological category $H(\AA)$, which has the same objects as
$\AA$ and has morphism spaces $\hom_{H(\AA)}(X,Y) = H(\hom_{\AA}(X,Y),
\mu_{\AA}^1)$ with (associative) composition
\[
[a_2] \cdot [a_1] = (-1)^{|a_1|} [\mu_{\AA}^2(a_2, a_1)].
\]
We call $\AA$ \emph{cohomologically unital} (c-unital for short) if $H(\AA)$ has
identity morphisms (so is a category in the standard sense).  Although this is
perhaps not the most natural notion in the context of $\ainf$-categories, all
categories considered in this paper will be assumed to be c-unital, since Fukaya categories always carry cohomological units for geometric reasons.

There is another notion of unitality that is helpful to consider although Fukaya categories in general do not satisfy it:  we say $\AA$ is strictly unital if, for each $X$, there is an element $e_X \in \hom^0(X,X)$ such that
\begin{itemize}
\item $\mu^1(e_X) = 0$;
\item $(-1)^{|a|}\mu^2(e_X,a) = a = \mu^2(a, e_X)$ for $a \in \hom(X_0,X_1)$;
\item $\mu^d(a_{d-1}, \ldots, e_X, \ldots, a_1) =0$ for all $d \geq3$.
\end{itemize}
This is useful because every c-unital $\ainf$-category is quasi-equivalent to a strictly unital one \cite[Lemma 2.1]{seidelbible}.

\subsection{Functors}
An $\ainf$-functor $\FF \colon \AA \to \BB$ consists of a map $\FF \colon Ob\AA \to
Ob\BB$ and maps
\[
\FF^d\ \colon \hom_{\AA}(X_{d-1}, X_d) \otimes \cdots \otimes \hom_{\AA}(X_0,
X_1)  \to \hom_{\BB}(\FF X_0, \FF X_d)[1-d]
\]
for all $d \geq1$, which are required to satisfy
\begin{multline}
\label{eqn:functor}
 \sum_r \sum_{s_1 + \cdots + s_r = d} \mu_{\BB}^r (\FF^{s_r}(a_d, \ldots, a_{d- s_r +1}),\ldots,
\FF^{s_1}(a_{s_1}, \ldots, a_1)) \\
 = \sum_{m,n} (-1)^{\kreuz_n} \FF^{d-m+1}(a_d, \ldots,\mu_{\AA}^m (a_{n+m},
\ldots, a_{n+1} ),a_n, \ldots, a_1).
\end{multline}
$\FF$ induces a functor $H\FF \colon H(\AA) \to H(\BB)$ by $[a] \mapsto
[\FF^1(a)]$.  We call a functor $\FF$ between c-unital categories c-unital if
$H\FF$ is unital.  All functors in this paper will be assumed to be c-unital.  We say $\FF$ is \emph{cohomologically full and faithful} if $H\FF$ is full and faithful, and we say $\FF$ is a \emph{quasi-equivalence} if $H\FF$ is an equivalence.

The set of $\ainf$-functors $\FF \colon \AA \to \BB$ can itself be considered as the
objects of an $\ainf$-category $fun(\AA,\BB)$ (or more specifically $nu-fun(\AA,\BB)$ for ``non-unital functors'' if we make no assumptions about units).  We shall only need this in the following
specific context.

\subsection{\texorpdfstring{$\ainf$-modules}{A-infinity modules}}
We first note that any dg category can be considered as an $\ainf$-category
with $\mu^d =0$ for $d \geq 3$.  In particular, for a given $\ainf$-category
$\AA$, we can consider $\ainf$-functors from $\AA^{opp}$ to the category of chain complexes
$Ch$ over $\KK$.  We call such functors $\ainf$-modules over $\AA$.  Such functors can be thought of as the objects of a new $\ainf$-category $\QQ=mod(\AA) =fun(\AA^{opp}, Ch)$.

An $\ainf$-module $\MM \colon \AA \to Ch$ assigns a graded vector space $\MM(X)$ to all
$X \in Ob \AA$ and, in this specific setting, we follow \cite{seidelbible} in changing notation of \eqref{eqn:functor} slightly so that we have maps
\[
\mu_{\MM}^d \colon \MM(X_{d-1}) \otimes \hom_{\AA}(X_{d-2}, X_{d-1}) \otimes
\cdots \otimes \hom_{\AA}(X_0, X_1)  \to \MM(X_0)[2-d]
\]
satisfying
\begin{multline}
\label{eqn:ainfmodules}
\sum_{m,n} (-1)^{\kreuz_n} \mu_{\MM}^{n+1}
(\mu_{\MM}^{d-n}(b,a_{d-1}, \ldots, a_{n+1}),\ldots, a_1) \\
+ \sum_{m,n} (-1)^{\kreuz_n} \mu_{\MM}^{d-m+1}(b, a_{d-1},
\ldots,\mu_{\AA}^m (a_{n+m}, \ldots, a_{n+1}),a_n, \ldots, a_1) = 0.
\end{multline}
The morphism space $\hom^r_{\QQ}(\MM_0, \MM_1)$ in degree $r$ is made up of
so-called \emph{pre-module homomorphisms} $t = (t^d)_{d \geq 1}$ where
\[
t^d \colon \MM_0(X_{d-1}) \otimes \hom_{\AA}(X_{d-2}, X_{d-1}) \otimes \cdots
\otimes \hom_{\AA}(X_0, X_1)  \to \MM_1(X_0)[r - d + 1].
\]
The composition maps in $\QQ$ are
\begin{align}
\label{eqn:mu1q}
& \left( \mu_{\QQ}^1 t \right)^d (b, a_{d-1}, \ldots, a_1) =  \\
& \quad \quad \sum (-1)^{\ddagger} \mu_{\MM_1}^{n+1}( t^{d-n} (b, a_{d-1}, \ldots, a_{n+1}), a_n, \ldots, a_1) \nonumber \\
& \quad \quad {}+ \sum (-1)^{\ddagger} t^{n+1}(\mu_{\MM_0}^{d-n} (b, a_{d-1}, \ldots, a_{n+1}), a_n, \ldots, a_1) \nonumber\\
& \quad \quad {}+ \sum (-1)^{\ddagger} t^{d-m+1} (b, a_{d-1}, \ldots, \mu_{\AA}^m(a_{n+m},\ldots,  a_{n+1}), \ldots, a_1); \nonumber\\
& \left( \mu_{\QQ}^2 (t_2, t_1) \right)^d (b, a_{d-1}, \ldots, a_1) = \\
& \quad \quad \sum (-1)^{\ddagger} t_2^{n+1}( t_1^{d-n} (b, a_{d-1}, \ldots, a_{n+1}), a_n, \ldots, a_1); \nonumber
\end{align}
and $\mu_{\QQ}^d = 0$ for $d \geq 3$.  Here $\ddagger = |a_{n+1}| + \cdots + |a_{d-1}| + |b| - d + n +1$.  We stress that the fact that higher composition maps vanish is not true for more general $\ainf$-functor categories, but rather reflects the dg nature of $Ch$.  

If $\mu^1_{\QQ}t=0$, we say that $t$ is a $\ainf$-\emph{module homomorphism}.  In this situation, we have a map $H(t) \colon H(\MM_0(X)) \to H(\MM_1(X))$ for all $X$, given by $[b] \mapsto [(-1)^{|b|} t^1(b)]$, where here $H(\MM(X))$ is the cohomology of $\MM(X)$ computed with respect to the differential $\partial(b) = (-1)^{|b|}\mu^1_{\MM}(b)$.

\begin{lem}
\emph{(\cite[Lemma 1.16]{seidelbible})}
Suppose the $\ainf$-module homomorphism $t \in \hom_{\QQ}(\MM_0,\MM_1)$ is such
that the induced maps $H(t) \colon H(\MM_0(X)) \to H(\MM_1(X))$ are
isomorphisms for all $X$.  Then, left composition with $t$ induces a
quasi-isomorphism $\hom_{\QQ}(\MM_1,\NN) \to \hom_{\QQ}(\MM_0, \NN)$ and a
similar result holds for right composition.
\end{lem}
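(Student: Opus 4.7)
The plan is to show that left composition with $t$, given by $C_t(s) := \mu^2_\QQ(s, t)$, is a quasi-isomorphism by means of a length filtration on morphism spaces in $\QQ$. Introduce the decreasing filtration
\[
F^p \hom_\QQ(\MM, \NN) = \{\, s = (s^d)_{d \geq 1} : s^d = 0 \text{ for all } d < p \,\}.
\]
The filtration is complete, i.e.\ $\hom_\QQ(\MM, \NN) = \varprojlim_p \hom_\QQ(\MM, \NN)/F^p$, since a pre-module homomorphism is just a sequence of its components. Direct inspection of the formulae \eqref{eqn:mu1q} shows that both $\mu^1_\QQ$ and $C_t$ preserve this filtration (for $C_t$ one uses that $t \in F^1$).

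On the associated graded $F^p/F^{p+1}$, an element is recorded by its $s^p$ component alone. The differential inherited from $\mu^1_\QQ$ collapses to three surviving contributions: postcomposition of the output of $s^p$ with $\mu^1_\NN$, precomposition of the first slot with $\mu^1_\MM$, and precomposition of each intermediate slot with $\mu^1_\AA$; every other summand in \eqref{eqn:mu1q} either requires a component $s^{d}$ with $d < p$ or outputs into $F^{p+1}$. Hence $F^p/F^{p+1}$ is the chain complex
\[
\prod_{X_0, \ldots, X_{p-1}} \hom_\KK\bigl(\MM(X_{p-1}) \otimes \hom_\AA(X_{p-2}, X_{p-1}) \otimes \cdots \otimes \hom_\AA(X_0, X_1),\ \NN(X_0)\bigr),
\]
where each tensor factor carries its own $\mu^1$ as a differential. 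Similarly, the formula for $\mu^2_\QQ$ shows that $C_t$ on $F^p/F^{p+1}$ reduces to precomposition of the leading tensor factor by $t^1 \colon \MM_0(X_{p-1}) \to \MM_1(X_{p-1})$, the higher components $t^{d-p+1}$ with $d > p$ contributing only to $F^{p+1}$.

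Since we work over a field $\KK$ and the hom-spaces are finite-dimensional, the Künneth theorem identifies the cohomology of each $F^p/F^{p+1}$ with the same product of $\hom_\KK$ spaces where every tensor factor is replaced by its cohomology; in particular $C_t$ then acts by precomposition with $H(t^1)$, which is an isomorphism by hypothesis. Thus $C_t$ is a quasi-isomorphism on each associated-graded layer. The standard spectral sequence of a complete filtered complex — equivalently, an explicit inductive construction of a chain-homotopy inverse built order by order in $p$ — then promotes this to a quasi-isomorphism of total complexes. The main technical point to watch is that the length filtration is unbounded below, so completeness is what assembles the termwise isomorphisms into a global one; fortunately this is automatic from the product description of $\hom_\QQ$. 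The sign bookkeeping in verifying the associated-graded differential is routine. The right-composition statement follows identically from the analogous formula for $\mu^2_\QQ(t,s)$, where $C_t$ on the associated graded becomes postcomposition with $t^1$ on the output factor.
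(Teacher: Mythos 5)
The paper itself gives no proof of this lemma; it merely cites Seidel's Lemma 1.16, and the argument given there is precisely the length-filtration argument you reproduce: filter pre-module homomorphisms by the smallest nonvanishing component, identify the associated-graded differential as the internal $\hom$-complex differential, observe that left composition with $t$ acts on each graded piece by precomposition with $t^1$ in the leading slot, use that over a field this is a quasi-isomorphism whenever $H(t^1)$ is invertible, and conclude by completeness of the filtration. Your proof is correct and is essentially the same as the one in the cited source.
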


\begin{cor}
\label{thm:cohomology}
Under the above hypotheses, $[t]$ is an isomorphism in $H(\QQ)$.
\end{cor}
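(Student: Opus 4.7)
The plan is to construct a two-sided inverse for $[t]$ in $H(\QQ)$ by applying the preceding lemma twice, for two different choices of the auxiliary module $\NN$. A cohomological identity $[\id_{\MM_i}] \in H^0(\hom_{\QQ}(\MM_i,\MM_i))$ is available for $i=0,1$ because the running assumption in this paper is that all $\ainf$-categories, including $\QQ$, are c-unital.

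First, I apply the lemma with $\NN = \MM_0$. Left composition with $t$ then gives a quasi-isomorphism $\hom_{\QQ}(\MM_1,\MM_0) \to \hom_{\QQ}(\MM_0,\MM_0)$, so the induced map on $H^0$ is in particular surjective. Hence the class $[\id_{\MM_0}]$ has a preimage $[s] \in H^0(\hom_{\QQ}(\MM_1,\MM_0))$, which by construction satisfies $[s]\cdot [t] = [\id_{\MM_0}]$ in $H(\QQ)$.

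Next, I apply the lemma with $\NN = \MM_1$. Left composition with $t$ gives a quasi-isomorphism $\hom_{\QQ}(\MM_1,\MM_1) \to \hom_{\QQ}(\MM_0,\MM_1)$, which is in particular injective on $H^0$. Consider the class $[t]\cdot[s] - [\id_{\MM_1}] \in H^0(\hom_{\QQ}(\MM_1,\MM_1))$. Using associativity of composition in $H(\QQ)$ and the identity already established, its image under left composition with $t$ equals $[t]\cdot([s]\cdot[t]) - [\id_{\MM_1}]\cdot[t] = [t]\cdot[\id_{\MM_0}] - [t] = 0$. By injectivity, $[t]\cdot[s] = [\id_{\MM_1}]$, so $[s]$ is a two-sided inverse of $[t]$ in $H(\QQ)$.

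There is no real obstacle in this argument; it is just the standard two-sided-inverse trick applied inside a cohomology category. The only points that need a moment of care are the convention for ``left composition'' in the statement of the lemma (so that one correctly reads off whether the preimages produced give left or right inverses), and the appeal to c-unitality to ensure that the classes $[\id_{\MM_i}]$ really do lie in $H(\QQ)$.
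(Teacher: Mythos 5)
Your argument is correct and is precisely the standard two-sided-inverse argument that the paper (which leaves the corollary unproved, citing Seidel) has in mind: apply the lemma with $\NN = \MM_0$ to get a right-ish preimage of $[\id_{\MM_0}]$, then with $\NN = \MM_1$ and injectivity to upgrade it to a two-sided inverse. The only inessential point is your appeal to a ``running c-unitality assumption'': in fact $\QQ = mod(\AA)$ is automatically (even strictly) unital because $Ch$ is a dg category, so the classes $[\id_{\MM_i}]$ exist without any hypothesis on $\AA$.
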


Given $Y \in \AA$, there is an associated $\ainf$-module $\YY \in \QQ$ where
\[
\YY(X) = \hom_{\AA}(X,Y), \quad  \mu_{\YY}^d=\mu_{\AA}^d.
\]
This forms part of an $\ainf$-functor $\ell \colon \AA \to \QQ$ called the \emph{Yoneda embedding}.  Given $t\in \hom_{\AA}(Y,Z)$, $\ell^1(t) \in \hom_{\QQ}(\YY,\zz)$ is the morphism
\[
\left( \ell^1(t) \right)^d (b, a_{d-1}, \ldots, a_1) = \mu_{\AA}^{d+1}(t,b, a_{d-1}, \ldots, a_1),
\]
and the higher order parts of the functor $\ell$ are defined similarly. $\ell$ is cohomologically full and faithful \cite[Corollary 2.13]{seidelbible}.

\subsection{Twisted complexes}
Given $\AA$ we can form a new category $\Sigma \AA$ called the \emph{additive enlargement} of $\AA$ whose objects are formal sums
\[
X = \bigoplus_{i \in I} V_i \otimes X_i,
\]
where $I$ is some finite set, the $V_i$ are finite-dimensional graded vector spaces and $X_i$ are objects of $\AA$.
\[
\hom_{\Sigma \AA} \left( \bigoplus_{i \in I} V_i \otimes X_i, \bigoplus_{j \in J} W_j \otimes Y_j \right) = \bigoplus_{i,j} \hom_{\KK}(V_i,W_j) \otimes \hom_{\AA}(X_i,Y_j),
\]
and we write morphisms $a\in \hom_{\Sigma\AA}(X,Y)$ as $\alpha^{ji} \otimes x^{ji}$ where $\alpha^{ji}$ and $x^{ji}$ are matrices of morphisms in $\hom_{\KK}(V_i,W_j)$, $\hom_{\AA}(X_i,Y_j)$ respectively.  The composition maps are given by
\[
\mu^d_{\Sigma \AA}(a_d, \ldots , a_1) = \sum (-1)^{\triangleleft}\alpha_d \cdots \alpha_1 \otimes \mu^d_{\AA}(x_d, \ldots, x_1)
\]
where $\triangleleft = \sum_{p<q}|\alpha^{i_p,i_{p-1}}_p|\cdot(|x_q^{i_q,i_{q-1}}|-1)$.
$\AA$ clearly sits inside $\Sigma \AA$ as a full $\ainf$-subcategory once an object $X$ is mapped to $\KK \otimes X$, with $\KK$ given grading zero.

A twisted complex in $\AA$ is an object $X$ of $\Sigma \AA$, together with a differential $\delta_X \in \hom_{\Sigma \AA}^1(X,X)$ which satisfies the following conditions:
\begin{itemize}
\item $\delta_X$ is strictly lower-triangular with respect to some filtration on $X$.  By ``filtration'' here we mean a finite decreasing collection of subcomplexes $F^iX$ such that the induced differential on $F^kX/F^{k+1}X$ is zero \cite[Section 3l]{seidelbible};
\item $\sum_d \mu^d_{\Sigma \AA} (\delta_X, \ldots, \delta_X)=0$.
\end{itemize}
Given this we can define new composition maps
\begin{multline*}
\mu_{Tw\AA}^d(a_d, \ldots, a_1)\\
 = \sum_{i_0, \ldots, i_d} \mu_{\Sigma \AA}^{d+i_0 + \cdots +i_d} \left( \underbrace{\delta_{X_d}, \ldots \delta_{X_d}}_{i_d}, a_d, \underbrace{\delta_{X_{d-1}}, \ldots, \delta_{X_{d-1}}}_{i_{d-1}}, a_{d-1}, \ldots, a_1, \underbrace{\delta_{X_0}, \ldots, \delta_{X_0}}_{i_0} \right).
\end{multline*}
The sum is taken over all $i_j \geq0$, but the conditions on $\delta_X$ imply that this is a finite sum and that moreover the $\ainf$-relations \eqref{eqn:ainf} still hold.  $\Sigma \AA$ sits inside $Tw\AA$ as a full $\ainf$-subcategory given by those twisted complexes with zero differential.

We may relate $Tw\AA$ and $\QQ$ using the diagram below.  $\mathcal{I}$ is the obvious inclusion functor and $\mathcal{I}^*$ is the induced pullback.  The reader may find the appropriate formulae in \cite{seidelbible}.
\begin{equation}
\label{eqn:twq}
\xymatrix{
{\AA} \ar[r]^{\ell} \ar[d]^{\mathcal{I}} & {\QQ} \\
{Tw\AA} \ar[r]^{\ell}  & {mod(Tw\AA).} \ar[u]^{\mathcal{I}^*}
}
\end{equation}
We shall denote the resulting map from $Tw\AA$ by $\QQ$ by $\tilde{\ell}$.

\subsection{Tensor products and shifts}
Working in the larger categories $Tw\AA$ and $\QQ$ allows us perform many familiar algebraic constructions not necessarily possible in $\AA$.  As an example, take a chain complex $(Z,\partial)$ and an $\ainf$-module $\MM \in \QQ$ and define a new $\ainf$-module $Z \otimes \MM \in \QQ$ by
\begin{align}
\label{eqn:tensor}
& (Z \otimes \MM)(X) = Z \otimes \MM(X), \\
& \mu^1_{Z \otimes \MM}(z \otimes b) = (-1)^{|b| - 1} \partial (z) \otimes b +
z \otimes \mu^1_{\MM}(b), \nonumber \\
& \mu^d_{Z \otimes \MM}(z \otimes b, a_{d-1}, \ldots, a_1) = z \otimes
\mu^d_{\MM}(b, a_{d-1}, \ldots, a_1) \text{ for $d \geq 2$.} \nonumber
\end{align}

As a special case of this, consider $Z=\mathbb{K}$, a one-dimensional chain
complex concentrated in degree $-1$ and with trivial differential.  We shall
denote $Z \otimes \MM$ by $S\MM$ and call it the \emph{shift} of $\MM$.  Similarly we have $S^{\sigma}\MM$ for any $\sigma \in \ZZ$ and we have a canonical isomorphism
\[
\hom_{H(\QQ)}(\YY, S^{\sigma}\zz) = \hom_{H(\QQ)}(\YY,\zz)[\sigma].
\]
When $\AA$ is strictly unital, we can do a similar thing with twisted complexes.  Given $(X, \delta_X)\in Tw\AA$ and a chain complex $(Z, \partial)$, we can form the twisted complex
\[
\left( Z \otimes X, \id \otimes \delta_X + \widetilde{\partial} \otimes e_X \right),
\]
where $\widetilde{\partial}(z) = (-1)^{|z|-1} \partial(z)$.  We can also do shifts here: $S^{\sigma}Y = \KK[\sigma] \otimes Y$

\begin{rmk}
\emph{(\cite[Remark 3.2]{seidelbible})}
\label{rmk:strict}
Given a chain complex $(Z,\partial)$, we can form a new chain complex given by $H(Z)$ with trivial differential.  By choosing a linear map that picks a chain representative for each cohomology class, we can define a map $H(Z) \otimes \MM \to Z \otimes\MM$ and Corollary \ref{thm:cohomology} says that this will in fact induce an isomorphism in $H(\QQ)$.
\end{rmk}
%

\subsection{Evaluation maps}
Given $V \in \AA$ and $\YY \in \QQ$ we have an evaluation morphism
\begin{align}
\label{eqn:evaluate}
& ev \colon \YY(V) \otimes \VV \to \YY, \nonumber \\
& ev^d(y \otimes v,a_{d-1}, \ldots , a_1) = \mu^{d+1}_{\YY}(y,v,a_{d-1}, \ldots , a_1).
\end{align}
%
In the strictly unital case, we can also define this for twisted complexes.  In order to define \mbox{$ev \colon \hom_{Tw\AA}(V,Y) \otimes V \to Y$}, we require that $ev$ be an element of $\hom_{Tw\AA}(V,Y)^{\vee} \otimes \hom_{Tw\AA}(V,Y)$.  To do this, choose a homogeneous basis $\{b_i\}$ of $\hom_{Tw\AA}(Y,V)$ and let $\{\beta_i\}$ be the dual basis.  Now let $ev = \sum \beta_i \otimes b_i$.  It is easy to verify that the two maps correspond under $\tilde{\ell}$, so we shall feel justified in abusing notation and referring to both as $ev$ since it will always be clear in which setting we are working.

We can also define a dual evaluation map $ev^{\vee} \colon Y \to \hom_{Tw\AA}(Y,V)^{\vee} \otimes V$ given by $ev^{\vee} = \sum \gamma_j \otimes c_j$ where again $\{c_j\}$ is a basis for $\hom_{Tw\AA}(Y,V)$ and $\{\gamma_i\}$ is the dual basis.

\subsection{Cones and triangles}
Given $t \colon \MM_0 \to \MM_1$ a degree zero module homomorphism, we can form
the mapping cone $\mathcal{C} =Cone(t)$ given by
\begin{align}
\label{eqn:cone}
& \mathcal{C}(X) = \MM_0(X) [1] \oplus \MM_1(X), \nonumber \\
& \mu^d_{\mathcal{C}} \left( \left( \begin{array}{cc} b_0 \\ b_1 \end{array}
\right), a_{d-1}, \ldots , a_1 \right) = \left( \begin{array}{cc}
\mu^d_{\MM_0}(b_0, a_{d-1}, \ldots , a_1) \\ \mu^d_{\MM_1}(b_1, a_{d-1}, \ldots
, a_1) +  t^d(b_0,a_{d-1}, \ldots , a_1) \end{array} \right).
\end{align}

The cone $\cc$ comes with module homomorphisms $\iota$ and $\pi$ which
fit into the following diagram in $H(\QQ)$
\[
\xymatrix{
{\MM_0} \ar[r]^{[t]} & {\MM_1} \ar[d]^{[\iota]}  \\
 & {\mathcal{C}.} \ar[ul]^{[\pi]}_{[1]}
}
\]
Any triangle in $H(\AA)$ quasi-isomorphic to one of the above form under the Yoneda embedding is called \emph{exact}.

Likewise the cone of $t \colon X \to Y$ in $Tw\AA$ for a degree zero cocycle $t$ is given by
\[
Cone(t) = \left(SX \oplus Y, \begin{pmatrix} S(\delta_X) & 0 \\ -S(t) & \delta_Y \end{pmatrix} \right).
\]
We call an $\ainf$-category $\AA$ triangulated if every morphism $[t]$ fits into
some exact triangle and $\AA$ is closed under all shifts, positive and negative.

\begin{prop}
\emph{(\cite[Proposition 3.14]{seidelbible})}
If $\AA$ is a triangulated $\ainf$-category, then $H^0(\AA)$ is triangulated in
the classical sense.  Moreover, for $\FF$ an $\ainf$-functor between
triangulated $\ainf$-categories, $H\FF$ is an exact functor of triangulated
categories.
\end{prop}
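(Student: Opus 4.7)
The plan is to verify the classical triangulated category axioms (TR1)--(TR4) for $H^0(\AA)$, taking cones in $Tw\AA$ as the source of distinguished triangles, and then to show separately that $H\FF$ commutes with the shift and sends distinguished triangles to distinguished triangles. The shift on $H^0(\AA)$ is induced by $S^{\pm 1}$ acting on $Tw\AA$; because $\AA$ is triangulated these shifts land in $\AA$ up to quasi-isomorphism, and the canonical isomorphism $\hom_{H(\QQ)}(\YY, S^{\sigma}\zz) \cong \hom_{H(\QQ)}(\YY,\zz)[\sigma]$, combined with cohomological fullness and faithfulness of Yoneda, shows this is an autoequivalence of $H^0(\AA)$. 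I would declare a triangle in $H^0(\AA)$ to be distinguished precisely when it is isomorphic in $H^0$ to the image under $H\tilde{\ell}$ of a cone triangle $X \to Y \to Cone(t) \to SX$ for some degree-zero cocycle $t \in \hom^0_{Tw\AA}(X,Y)$.

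For (TR1), every class in $\hom_{H^0(\AA)}(X,Y)$ lifts to such a cocycle $t$, and $Cone(t) \in Tw\AA$ is represented by an object of $\AA$ up to quasi-isomorphism; the identity triangle is exact because $Cone(\id_X)$ admits an explicit nullhomotopy built from the diagonal entries of its differential. (TR2) is verified by exhibiting an explicit quasi-isomorphism $Cone(\iota) \simeq SX$, using the projection $\pi$ and the structure of the twisted complex. (TR3) requires lifting a cohomology-level commuting square $t_1 f_0 \sim f_1 t_0$ to a cochain-level homotopy $h$, and assembling $(f_0, f_1, h)$ into a morphism of cones. For (TR4), given composable cocycles $t\colon X\to Y$ and $s\colon Y\to Z$, one checks by a direct computation in $Tw\AA$ that the natural sequence $Cone(t) \to Cone(st) \to Cone(s)$ is itself isomorphic to a cone triangle, by writing down an explicit morphism whose cone is $Cone(s)$.

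For the functor statement, the higher components $\FF^d$ assemble into a canonical quasi-isomorphism $\FF(Cone(t)) \simeq Cone(\FF^1 t)$; specialising this (via zero maps, or by applying it to the identity morphism) identifies $\FF(SX)$ with $S(\FF X)$, while in general it shows that $H\FF$ sends distinguished triangles to distinguished triangles. The main obstacle lies with (TR3) and especially (TR4): both require explicit cochain-level constructions with careful sign bookkeeping in the twisted complex formalism, and (TR4) in particular demands identifying an iterated cone with the cone of a composite through an explicit quasi-isomorphism -- this is the step most prone to sign errors and is the heart of the argument.
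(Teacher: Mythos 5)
The paper does not prove this statement: it is cited verbatim as \cite[Proposition 3.14]{seidelbible}, so there is no ``paper's own proof'' to compare against. Your sketch instead has to be measured against the cited source, and on that count it follows the standard Bondal--Kapranov/Seidel route through $Tw\AA$, which is indeed the argument Seidel gives. Declaring distinguished triangles to be the images of cone triangles under $H\tilde{\ell}$, verifying (TR1)--(TR4) by explicit twisted-complex constructions, and deriving shift-preservation of $H\FF$ as the degenerate case $SX \simeq Cone(X \to 0)$ of cone-preservation is exactly the right skeleton, and you correctly flag (TR3) and (TR4) as the places where cochain-level homotopies and sign bookkeeping carry the real weight.

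Two points deserve sharpening. First, for the functor statement you write that ``the higher components $\FF^d$ assemble into a canonical quasi-isomorphism $\FF(Cone(t)) \simeq Cone(\FF^1 t)$''; this elides a step. One must first extend $\FF$ to a functor $Tw\FF \colon Tw\AA \to Tw\BB$ (whose first-order part on a twisted complex genuinely uses all the $\FF^d$ applied to powers of the differential $\delta_X$), and it is $Tw\FF$, not $\FF$ itself, that takes the cone twisted complex to something quasi-isomorphic to $Cone((Tw\FF)^1 t)$; one then descends to $\AA$ and $\BB$ via the assumed triangulatedness. Second, for (TR1) the nullhomotopy of $Cone(\id_X)$ is built from the (strict or cohomological) unit $e_X$ sitting in the off-diagonal block, not the ``diagonal entries of its differential'' as stated --- a small but load-bearing detail, since in the merely c-unital setting one must either pass to a strictly unital replacement or work with cohomological units carefully. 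Neither point is a fatal gap, but both would need to be filled in to turn the sketch into a proof.
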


For a given $\AA$, we can consider the triangulated $\ainf$-subcategory $\widetilde{\QQ} \subset \QQ$ generated by the image of the Yoneda embedding.  We call $H^0(\widetilde{\QQ})$ the \emph{derived category} of $\AA$, which we denote $D\AA$.  Equivalently, we may define $D\AA$ as $H^0(Tw\AA)$.

\section{\texorpdfstring{$\CC\pp^{\lowercase{n}}$-twists}{CPn twists}}
\label{sec:twists}
In the interests of legibility, we introduce the shorthand $\mathbf{a}_{d-1}$
for $a_{d-1}, \ldots, a_1$.

Huybrechts and Thomas \cite{huybrechtsthomas}, motivated by mirror symmetry, introduced the notion of a $\pp^n$-object $P$ in
the derived category $D(X)$ of a smooth projective variety $X$.  They showed that
there are associated twists $\Phi_P$ of $D(X)$ which are in fact autoequivalences.  We reinterpret their construction in our setting.

\begin{defn}
A $\CC\pp^n$-object is a pair $(V,h)$ where $V\in Ob \AA$ and $h \in \hom^2(V,V)$ such that
\begin{itemize}
\item $\mu_{\AA}^1 h=0$;
\item $\hom_{H(\AA)}(V,V) \cong \KK[h]/h^{n+1}$ as a graded ring;
\item There exists a map $\int \colon \hom^{2n}_{H(\AA)}(V,V) \to \mathbb{K}$ such that, for any $X$, the resulting bilinear map $\hom^{2n-k}_{H(\AA)}(X,V) \times \hom^k_{H(\AA)}(V,X) \to \hom^{2n}_{H(\AA)}(V,V) \to \mathbb{K}$ is nondegenerate.
\end{itemize}
We shall often just refer to a $\CC\pp^n$-object by $V$ since, following Remark \ref{rmk:strict}, the choice of $h$ will be irrelevant up to quasi-equivalence.
\end{defn}

To define our twist, we imitate the construction in \cite{huybrechtsthomas}.  Take some $\CC\pp^n$-object $V$ and consider the following diagram
\begin{equation}
\label{eqn:phi}
\xymatrix{
{\YY(V)[-2] \otimes \VV} \ar[r]^{H} & {\YY(V) \otimes \VV} \ar[r]^{\iota}
\ar[dr]^{ev} & {\HH_{\YY}} \ar[d]^{g}  \\
                   &                               & {\YY} \ar[d] \\
                   &                               & {\Phi_V\YY}
}
\end{equation}
where here $\HH_{\YY}$ is $Cone(H)$ and $\Phi_V\YY$ is $Cone(g)$.

Here $ev$ is the evaluation map \eqref{eqn:evaluate} and we define the other
maps by
\begin{align*}
& H^1(y \otimes v) = (-1)^{|y|+|v|} \mu_{\YY}^2(y,h) \otimes v + (-1)^{|y|-1} y \otimes \mu_{\VV}^2(h,v),\\
& H^d(y\otimes v,\mathbf{a}_{d-1}) = (-1)^{|y|-1}y \otimes \mu_{\VV}^{d+1}(h, v,\mathbf{a}_{d-1}) \text{\ for $d\geq2$}.
\end{align*}
and
\begin{align*}
& g^d \left( \left({ \begin{array}{c} y_1 \otimes v_1 \\y_2 \otimes v_2 \end{array} } \right),\mathbf{a}_{d-1} \right) =  \mu_{\YY}^{d+1} \left(y_2, v_2,\mathbf{a}_{d-1} \right) + (-1)^{|y_1|-1}\mu_{\YY}^{d+2} \left(y_1, h, v_1,\mathbf{a}_{d-1} \right). 
\end{align*}

\begin{lem}
$H$ and $g$ are $\mu^1_{\QQ}$-closed.
\end{lem}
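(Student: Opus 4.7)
The plan is to verify each closure condition $\mu^1_{\QQ}H = 0$ and $\mu^1_{\QQ}g = 0$ by direct substitution into \eqref{eqn:mu1q}, reducing everything to the $\ainf$-module relations \eqref{eqn:ainfmodules} for $\YY$ and $\VV$, combined with the hypothesis $\mu^1_{\AA} h = 0$.

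For $H$, the natural decomposition is $H = H_a + H_b$, where $H_a$ is concentrated in arity one as $H_a^1(y\otimes v) = (-1)^{|y|+|v|}\mu^2(y,h)\otimes v$, and $H_b^d(y\otimes v,\mathbf{a}_{d-1}) = (-1)^{|y|-1} y\otimes \mu^{d+1}(h,v,\mathbf{a}_{d-1})$ for all $d$. Modulo the sign conventions of \eqref{eqn:tensor}, $H_b$ is the tensor product $\id_{\YY(V)}\otimes \ell^1(h)$; since $\ell$ is an $\ainf$-functor and $h$ is a cocycle, $\ell^1(h)\in\hom_{\QQ}(\VV,\VV)$ is $\mu^1_{\QQ}$-closed, and tensoring with the identity chain map on $\YY(V)$ preserves closure. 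Similarly $H_a$ corresponds to $\rho_h\otimes\id_\VV$ where $\rho_h$ is "right multiplication by $h$" on the chain complex $\YY(V)$; that $\rho_h$ is a chain map is the single $\ainf$-relation $\mu^1\mu^2(y,h) + \mu^2(\mu^1 y,h) + (-1)^{|y|-1}\mu^2(y,\mu^1 h) = 0$ combined with $\mu^1 h = 0$.

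For $g$, I split $g = g_{\mathrm{ev}} + g_h$ according to its two summands; $g_{\mathrm{ev}}$ depends only on the $\MM_1 = \YY(V)\otimes\VV$ component of $\HH_{\YY}=\mathrm{Cone}(H)$, and $g_h$ only on the $\MM_0 = \YY(V)[-2]\otimes\VV$ component. The piece $g_{\mathrm{ev}}$ is precisely the evaluation map $ev$ of \eqref{eqn:evaluate}, whose $\mu^1_{\QQ}$-closure is a standard consequence of the $\ainf$-module relations for $\YY$. Because of the cone structure \eqref{eqn:cone}, the closure equation for $g$ splits: the $\MM_1$-component is exactly the closure of $g_{\mathrm{ev}}$, while the $\MM_0$-component reads schematically
\[
\mu^1_{\QQ}g_h(y_1\otimes v_1,\mathbf{a}_{d-1}) \;+\; g_{\mathrm{ev}}\bigl(H(y_1\otimes v_1),\mathbf{a}_{d-1}\bigr) \;=\; 0.
\]
Expanding the left-hand side, this is precisely the $\ainf$-module relation \eqref{eqn:ainfmodules} for $\YY$ applied to the length-$(d+2)$ string $(y_1,h,v_1,a_{d-1},\ldots,a_1)$: the three contributions to $\mu^1_{\QQ}g_h$ from \eqref{eqn:mu1q} account for the groupings where the inner $\mu^m$ sits strictly to the right of $h$ or where the outer $\mu^{n+1}_{\YY}$ collects a prefix containing $h$; the contributions of $g_{\mathrm{ev}}\circ H_a$ and $g_{\mathrm{ev}}\circ H_b$ account respectively for the remaining $(y_1,h)$-prefix grouping and for the groupings that wrap a substring of the form $(h,v_1,\ldots,a_{n+1})$; and all terms containing $\mu^1 h$ vanish by hypothesis.

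The main obstacle is not substantive but combinatorial: the Koszul signs in \eqref{eqn:mu1q}, \eqref{eqn:tensor}, \eqref{eqn:cone}, \eqref{eqn:ainfmodules}, and the degree-$r=1$ shift intrinsic to the pre-module formalism all need to be reconciled, and the clean identifications $H = \rho_h\otimes\id + \id\otimes\ell^1(h)$ and $\mu^1_{\QQ}g_h = -g_{\mathrm{ev}}\circ H$ hold only up to these signs. The specific signs $(-1)^{|y|+|v|}$, $(-1)^{|y|-1}$ in $H$ and $(-1)^{|y_1|-1}$ in $g$ are evidently chosen precisely to make the identifications above exact; verifying this matching term by term is the bulk of the work in a complete proof.
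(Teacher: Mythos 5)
Your approach is correct in outline and takes a genuinely more structural route than the paper's proof, which simply expands $\mu^1_{\QQ}H$ and $\mu^1_{\QQ}g$ in full and observes that the result is the relevant $\ainf$-(module) relation minus the $\mu^1_{\AA}h$ terms. Your decomposition $H = H_a + H_b$ with $H_a = \rho_h\otimes\id_{\VV}$ and $H_b = \id_{\YY(V)}\otimes\ell^1(h)$, together with the observation that each factor is separately closed (as a chain map on $\YY(V)$ and as $\ell^1$ of a cocycle, respectively), packages the same cancellations more conceptually; in particular, the ``terms involving $\mu^1_\YY(y)$ and $\mu^2_\YY(y,h)$ cancel'' step in the paper is, in your language, the statement that $\rho_h$ commutes with $\mu^1_\YY$. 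Likewise, your splitting $\mu^1_\QQ g = \bigl(\mu^1_\QQ g_{\mathrm{ev}}\bigr) \oplus \bigl(\mu^1_\QQ g_h + \mu^2_\QQ(g_{\mathrm{ev}},H)\bigr)$ along the two summands of $\mathrm{Cone}(H)$ is exactly how the last two lines of the paper's expansion for $g$ (those ``coming from the presence of $H$ in the $\mu^d$ maps in $\mathrm{Cone}(H)$'') enter, and your identification of the $\MM_0$-component with the $\ainf$-module relation for $\YY$ on the string $(y_1,h,v_1,a_{d-1},\dots,a_1)$ is the same identification the paper makes. What your version buys is clarity and reusability: each piece of the argument is a named structural fact rather than a term in a large sum. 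What it costs is that the two clean identifications (``$H_a=\rho_h\otimes\id$, $H_b=\id\otimes\ell^1(h)$'' and ``the $\MM_0$-component is a disguised module relation'') are only true up to signs, and those signs in \eqref{eqn:tensor} and \eqref{eqn:mu1q} are not plain Koszul signs, so they do need a direct check. One small imprecision in your bookkeeping: $\mu^1_\QQ g_h$ also contributes the groupings that wrap a substring $(v_1,a_{d-1},\dots,a_{n+1})$ \emph{without} $h$ (via the $\mu^{d-n}_{\MM_0}$ term in \eqref{eqn:mu1q}, which acts as $\mu_\VV$ on the second tensor factor) and the $\mu^1_\YY(y_1)$ grouping; your phrase ``strictly to the right of $h$'' should be read to include these. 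With that caveat, your proof is a valid alternative organization of the paper's calculation.
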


\begin{proof}
This is a direct calculation.  Using \eqref{eqn:mu1q} and \eqref{eqn:tensor},
we see that
\begin{align*}
&\left( \mu^1_{\QQ} H \right)^d(y \otimes v, \mathbf{a}_{d-1}) = \\
& \;y \otimes \left( \begin{array}{c} 
\sum_n  (-1)^{\ddagger_n+|y|-1}\mu_{\VV}^{n+1}(\mu^{d-n+1} (h,v , a_{d-1}, \ldots, a_{n+1}), \ldots, a_1) \\
{} + \sum_n (-1)^{\ddagger_n+|y|-1}\mu_{\VV}^{n+2}(h, \mu^{d-n}(v,a_{d-1}, \ldots, a_{n+1}), \ldots, a_1) \\
{} + \sum_{m,n} (-1)^{\ddagger_n+|y|-1}\mu_{\VV}^{d-m+2}(h,v,a_{d-1}, \ldots, \mu_{\AA}^m( a_{n+m}, \ldots, a_{n+1}), \ldots, a_1)   \end{array} \right) \\
& \;{} + \left( (-1)^{\ddagger_0 + |y| -1 +|\mu^{d+1}(h,v,\mathbf{a}_{d-1})|-1} + (-1)^{\ddagger_{d-1}+ |v| -1+|y|-2}\right) \mu_{\YY}^1(y) \otimes \mu_{\VV}^{d+1}(h,v,\mathbf{a}_{d-1})\\
& \;{} + \left((-1)^{\ddagger_{d-1}+|y|+|v|} + (-1)^{\ddagger_0+|y|+|\mu^d(v,\textbf{a}_{d-1})|}\right)\mu_{\YY}^2(y,h) \otimes \mu_{\VV}^d(v,\mathbf{a}_{d-1} ). 
\end{align*}

The terms involving $\mu_{\YY}^1(y)$ and $\mu_{\YY}^2(y,h)$ cancel, and inside the big
bracket, we find precisely the terms from the $\ainf$-relation \eqref{eqn:ainf} except for the term involving $\mu_{\VV}^d(\mu_{\AA}^1(h),v, \textbf{a}_{d-1})$.  But, by assumption, $\mu_{\AA}^1(h)=0$ so this term vanishes.

The proof for $g$ is similar:
\begin{align*}
&\left( \mu^1_{\QQ} g \right)^d \left( \left({ \begin{array}{c} y_1 \otimes v_1 \\y_2 \otimes v_2 \end{array} }\right),\mathbf{a}_{d-1} \right) = \\
& {}\phantom{{}+{}} \sum_n  (-1)^{\ddagger_n+|y_1|-1} \mu_{\YY}^{n+1}(\mu_{\YY}^{d-n+2} (y_1,h,v_1 , a_{d-1}, \ldots, a_{n+1}), \ldots, a_1) \\
& {} + \sum_n  (-1)^{\ddagger_n+|y_1|-1} \mu_{\YY}^{n+3}(y_1,h, \mu_{\VV}^{d-n}(v_1,a_{d-1}, \ldots, a_{n+1}), \ldots, a_1) \\
& \quad \quad \quad {} + (-1)^{\ddagger_{d-1} + |\mu^1(y_1)|-1 +|v_1|-1} \mu_{\YY}^{d+2}(\mu_{\YY}^1(y_1),h,v_1, a_{d-1}, \ldots, a_1) \\
& {} + \sum_{m,n} (-1)^{\ddagger_n+|y_1|-1} \mu_{\YY}^{d-m+3}(y_1,h,v_1,a_{d-1}, \ldots, \mu_{\AA}^m( a_{n+m}, \ldots, a_{n+1}), \ldots, a_1) \\
& {} \\
& {} + \sum_n  (-1)^{\ddagger_n}\mu_{\YY}^{n+1}(\mu_{\YY}^{d-n+1} (y_2,v_2 , a_{d-1}, \ldots, a_{n+1}), \ldots, a_1) \\
& {} + \sum_n (-1)^{\ddagger_n}\mu_{\YY}^{n+2}(y_2, \mu_{\VV}^{d-n}(v_2,a_{d-1}, \ldots, a_{n+1}), \ldots, a_1) \\
& \quad \quad \quad {} + (-1)^{\ddagger_{d-1} + |v_2|-1} \mu_{\YY}^{d+1}(\mu_{\YY}^1(y_2),v_2, a_{d-1}, \ldots, a_1) \\
& {} + \sum_{m,n} (-1)^{\ddagger_n}\mu_{\YY}^{d-m+2}(y_2,v_2,a_{d-1}, \ldots, \mu_{\AA}^m( a_{n+m}, \ldots, a_{n+1}), \ldots, a_1) \\
& {} \\
& {} + \sum (-1)^{\ddagger_n+|y_1|-1}\mu_{\YY}^{n+2}(y_1, \mu_{\VV}^{d-n+1}(h,v_1,a_{d-1}, \ldots, a_{n+1}), \ldots, a_1) \\
& \quad \quad \quad {} + (-1)^{\ddagger_{d-1} + |y_1| + |v_1|} \mu_{\YY}^{d+1}(\mu_{\YY}^2(y_1,h),v_1, a_{d-1}, \ldots, a_1).
\end{align*}

Here the final two lines come from the presence of $H$ in the $\mu^d$ maps in $Cone(H)$ as in \eqref{eqn:cone}.  Again we find all the terms from \eqref{eqn:ainfmodules} except for those involving $\mu_{\AA}^1(h)$, so the above sum vanishes.

\end{proof}

Concretely, $\Phi_V\YY = \left(\YY(V) \otimes \VV \right) \oplus
\left(\YY(V)[1] \otimes \VV \right) \oplus \YY$ and
\begin{equation*}
\mu_{\Phi_V\YY}^1 \left( \begin{array}{ccc}
y_1 \otimes v_1\\
\begin{array}{cc}y_2 \otimes v_2 \\ {\;} \end{array}\\
y_3 \end{array}
\right) 
=
\left( \begin{array}{ccc}
(-1)^{|v_1|-1}\mu_{\YY}^1(y_1) \otimes v_1      +      y_1 \otimes \mu_{\VV}^1(v_1) \\
\begin{array}{cc} (-1)^{|v_2|-1}\mu_{\YY}^1(y_2) \otimes v_2      +   (-1)^{|y_1|+|v_1|}\mu_{\YY}^2(y_1,h) \otimes v_1    \\+ y_2 \otimes \mu_{\VV}^1(v_2) + (-1)^{|y_1|-1}y_1 \otimes \mu_{\VV}^2(h,v_1) \end{array} \\
\mu_{\YY}^1(y_3) + \mu_{\YY}^2(y_2,v_2) +  (-1)^{|y_1|-1}\mu_{\YY}^3(y_1, h, v_1) \end{array}
\right)
\end{equation*}
and, for $d\geq2$,
\begin{multline*}
\mu_{\Phi_V\YY}^d \left( \left( \begin{array}{ccc}
y_1 \otimes v_1\\
y_2 \otimes v_2\\
y_3 \end{array}
\right), \mathbf{a}_{d-1} \right) \\
=
\left( \begin{array}{ccc}
y_1 \otimes \mu_{\VV}^d(v_1,\mathbf{a}_{d-1}) \\
y_2 \otimes \mu_{\VV}^d(v_2,\mathbf{a}_{d-1}) + (-1)^{|y_1|-1}y_1\otimes \mu_{\VV}^{d+1}(h,v_1, \mathbf{a}_{d-1}) \\
\mu_{\YY}^d(y_3,\mathbf{a}_{d-1}) + \mu_{\YY}^{d+1}(y_2,v_2,\mathbf{a}_{d-1}) + (-1)^{|y_1|-1}
\mu_{\YY}^{d+2}(y_1, h, v_1,\mathbf{a}_{d-1}) \end{array}
\right).
\end{multline*}

\subsection{\texorpdfstring{$\CC\pp^n$-twist functor}{CPn twist functor}}
We want to upgrade $\Phi_V$ to a functor $\Phi_V \colon \QQ \to \QQ$ and so,
having described the effect of $\Phi_V$ on objects, we must describe how it
acts on morphisms.

Firstly we set $\Phi_V^d=0$ for $d \geq 2$, so that $\Phi_V$ is in fact a dg
functor and, given $t \in \hom_{\QQ}(\YY,\zz)$, $\hat{t} = \Phi_V(t)$
has first order part
\[
\hat{t}^1 \left( \begin{array}{ccc}
y_1 \otimes v_1\\
y_2 \otimes v_2\\
y_3 \end{array}
\right)
=
\left( \begin{array}{ccc}
(-1)^{|v_1|+|t|} t^1(y_1) \otimes v_1 \\
(-1)^{|v_2|-1} t^1(y_2) \otimes v_2 + (-1)^{|y_1|+|v_1|}t^2(y_1,h) \otimes v_1 \\
t^1(y_3) + t^2(y_2,v_2) +  (-1)^{|y_1|-1} t^3(y_1, h, v_1) \end{array}
\right)
\]
and, for $d\geq2$, 
\begin{multline*}
\hat{t}^d \left( \left( \begin{array}{ccc}
y_1 \otimes v_1\\
y_2 \otimes v_2\\
y_3 \end{array}
\right), \mathbf{a}_{d-1} \right) \\
=
\left( \begin{array}{ccc}
0 \\
0 \\
t^d(y_3, \mathbf{a}_{d-1}) + t^{d+1}(y_2,v_2, \mathbf{a}_{d-1}) + (-1)^{|y_1|-1} t^{d+2}(y_1, h, v_1, \mathbf{a}_{d-1}) \end{array}
\right).
\end{multline*}

\begin{lem}
\label{thm:functor}
$\Phi_V$ is an $\ainf$-functor.
\end{lem}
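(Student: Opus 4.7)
The plan is to reduce the $\ainf$-functor equation \eqref{eqn:functor} to two strict identities and then verify those by a direct componentwise computation closely modelled on the preceding lemma (which showed $H$ and $g$ were $\mu^1_\QQ$-closed).

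Because $\Phi_V^d = 0$ for $d \geq 2$ by construction and both source and target of $\Phi_V$ are the dg category $\QQ$ (so $\mu^d_\QQ = 0$ for $d \geq 3$), \eqref{eqn:functor} at arity $d \geq 3$ is trivially $0 = 0$: on the left-hand side the only partition $s_1 + \cdots + s_r = d$ with all $s_i = 1$ is killed by $\mu^d_\QQ$, and on the right-hand side only the term with $d - m + 1 = 1$ could survive the vanishing of the higher $\Phi_V^j$, which forces $m = d$ and thus kills the inner $\mu^d_\QQ$ as well. Hence it suffices to check two identities: (a) the chain map condition $\mu^1_\QQ(\Phi_V^1 t) = \Phi_V^1(\mu^1_\QQ t)$, and (b) strict multiplicativity $\mu^2_\QQ(\Phi_V^1 t_2, \Phi_V^1 t_1) = \Phi_V^1(\mu^2_\QQ(t_2, t_1))$, i.e., that $\Phi_V$ is a dg functor between dg categories.

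For (a) I would apply $\mu^1_\QQ$ to $\hat{t} = \Phi_V^1 t$ componentwise, using the three-line formula \eqref{eqn:mu1q} and the explicit differential on $\Phi_V\YY$ recorded just above the statement. In each of the three components of $\Phi_V\zz$, the resulting expression reorganizes into the terms of $\Phi_V^1(\mu^1_\QQ t)$ plus cross terms coming from the interaction between $t$ and the insertions of $h$ and $v$; the cross terms then cancel in pairs, leaving only a residue involving $\mu^1_\AA h$ which vanishes by hypothesis. This is precisely the cancellation pattern of the preceding lemma, now carried through for the three-level formula defining $\hat{t}$. For (b) the computation is shorter: since $\mu^2_\QQ$ in a dg module category is pure substitution and $\hat{t}_i$ is built by plugging $t_i$ in at the three levels $t_i^d$, $t_i^{d+1}(-,v,-)$, and $t_i^{d+2}(-,h,v,-)$, the two sides match term by term by inspecting how the substitution of $t_1$ into $t_2$ distributes over these three levels.

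The main obstacle is sign bookkeeping. The expression for $\hat{t}^1$ already involves three grading shifts (one per summand of $\Phi_V\YY$) and three distinct ``levels'' of composition with $t$; applying $\mu^1_\QQ$ multiplies this by the three summands in \eqref{eqn:mu1q}, producing roughly nine classes of terms that must reassemble into the three classes appearing in $\widehat{\mu^1_\QQ t}$. I would organize the verification first by output component of $\Phi_V\zz$ (the bottom $\zz$-component being the most intricate, since all three levels contribute there), and then within each component by the arity of the inner $\YY$- or $\VV$-composition, mirroring the column-by-column layout of the previous lemma. No new ideas are needed beyond what is already visible in that lemma, but the combinatorics are markedly heavier.
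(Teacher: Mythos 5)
Your proposal is correct and takes essentially the same route as the paper: you reduce \eqref{eqn:functor} to the chain map identity $\mu^1_\QQ(\Phi^1_V t) = \Phi^1_V(\mu^1_\QQ t)$ and strict multiplicativity $\mu^2_\QQ(\Phi^1_V t_2, \Phi^1_V t_1) = \Phi^1_V(\mu^2_\QQ(t_2,t_1))$ via $\Phi_V^{\geq 2}=0$ and $\mu^{\geq 3}_\QQ = 0$, and then verify these by componentwise computation. The paper's proof is identical in structure but even terser, simply recording the two reduced identities and asserting they are straightforward calculations.
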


\begin{proof}
The condition we need to verify is \eqref{eqn:functor}, which here reduces to
the two conditions
\begin{align*}
\mu^1_{\QQ} (\Phi^1_V(t_1)) & = \Phi^1_V( \mu^1_{\QQ} (t_1)), \\
\mu^2_{\QQ} (\Phi^1_V(t_2), \Phi^1_V(t_1)) & = \Phi^1_V( \mu^2_{\QQ} (t_2, t_1)),
\end{align*}
since $\mu^d_{\QQ}=0$ for $d \geq 3$. Both are straightforward calculations.
\end{proof}

\begin{prop}
\label{thm:shift}
$\Phi_V \VV \cong S^{-2n}\VV$.   Also, if $\hom_{H(\AA)}(V,Y)=0$, then $\Phi_VY \cong Y$.
\end{prop}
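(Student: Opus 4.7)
For the second statement, the hypothesis $\hom_{H(\AA)}(V,Y)=0$ says that the chain complex $\YY(V)=\hom_\AA(V,Y)$ is acyclic. Since we work over a field, the $\ainf$-modules $\YY(V)[-2]\otimes\VV$ and $\YY(V)\otimes\VV$ are then acyclic, and so is their cone $\HH_\YY$. The exact triangle $\HH_\YY\to\YY\to\Phi_V\YY$ immediately forces $\Phi_V\YY\cong\YY$ in $H(\QQ)$.

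For $\Phi_V\VV\cong S^{-2n}\VV$, the plan is to compute $\HH_\VV$ up to quasi-isomorphism in $H(\QQ)$ and then to identify the map $g$. By Remark \ref{rmk:strict} together with Corollary \ref{thm:cohomology}, I would replace the chain complex $\VV(V)=\hom_\AA(V,V)$ with its cohomology $A=\KK[h]/h^{n+1}$ equipped with the trivial differential when computing things in $H(\QQ)$. In this simplified model $H$ becomes, on the nose, multiplication by $h$ on $A$; this has one-dimensional kernel spanned by $h^n$ and one-dimensional cokernel spanned by $1$, and restricts to an isomorphism between the complementary subspaces. A direct cohomology computation of the two-term complex $Cone(H)$ then yields
\[
\HH_\VV \;\simeq\; \VV \;\oplus\; S^{-(2n+1)}\VV
\]
in $H(\QQ)$, the two summands recording respectively the cokernel class $1$ in degree $0$ and the shifted kernel class $h^n$ in degree $2n+1$.

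The next step is to identify $g\colon\HH_\VV\to\VV$ under this splitting. On the $\VV$-summand $g$ sends the representative $1\otimes v$ to $\mu^2_\YY(1,v)=v$, so its restriction is the identity in $H(\QQ)$. On the $S^{-(2n+1)}\VV$-summand a grading obstruction does the job without any explicit computation: by Yoneda,
\[
\hom^0_{H(\QQ)}(S^{-(2n+1)}\VV,\VV)\;\cong\;\hom^{2n+1}_{H(\AA)}(V,V)=0,
\]
since $\KK[h]/h^{n+1}$ is concentrated in the even degrees $0,2,\dots,2n$. Hence $g$ corresponds to $(\id_\VV,0)$ on the decomposition, and
\[
\Phi_V\VV=Cone(g)\;\simeq\;Cone(\id_\VV)\,\oplus\,S\cdot S^{-(2n+1)}\VV\;\simeq\;S^{-2n}\VV,
\]
since $Cone(\id_\VV)$ is acyclic.

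The main obstacle is making the splitting $\HH_\VV\simeq\VV\oplus S^{-(2n+1)}\VV$ rigorous at the level of $\ainf$-modules rather than merely on the underlying chain complexes; once that has been pushed through (via Remark \ref{rmk:strict} and the explicit decomposition of multiplication by $h$ on $\KK[h]/h^{n+1}$), the grading argument disposes of $g$ on the second summand for free, with no need to invoke the Poincar\'e-duality-type third axiom of a $\CC\pp^n$-object.
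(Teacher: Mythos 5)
Your treatment of the second statement is fine and amounts to the same observation the paper dismisses as trivial.

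For the first statement, the overall strategy — first compute $\HH_\VV$ up to quasi-isomorphism and then dispose of $g$ by a grading obstruction — is a legitimate route, genuinely different from the paper's, and closer in spirit to Huybrechts--Thomas. The paper instead works directly with the three-term description of $\Phi_V\VV$ and performs an iterated projection argument, at each stage killing an acyclic pair of summands (first $e_V[1]\VV\oplus\VV$, then $e_V\VV\oplus h[-1]\VV$, and so on, ending with $h^n[-2n]\VV$). Your route has the advantage of exhibiting $\HH_\VV$ as a clean intermediate object and of isolating the grading obstruction that kills $g$ on the shifted summand; the paper's route avoids having to analyse $g$ separately. Your remark that the Poincar\'e-duality axiom is not needed for this proposition is correct and also holds for the paper's proof.

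However, there is a genuine gap in the key reduction. After invoking Remark \ref{rmk:strict} to replace $\VV(V)$ by $A=\KK[h]/h^{n+1}$, the morphism $H$ is \emph{not} ``on the nose, multiplication by $h$ on $A$''. Looking at the defining formulae, $H$ has the shape $\bar h\otimes e_V - \id\otimes h$ (the paper makes this explicit in the twisted-complex picture): the first piece is indeed multiplication by $h$ on the coefficient space $A$ tensored with the identity, but the second piece $-\id\otimes h$ is the identity on $A$ tensored with the module action of $h$ on $\VV$, together with all of its higher $\mu^{d+1}_\VV(h,\cdot,\dots)$ corrections. Replacing the coefficient chain complex by its cohomology kills nothing of that second piece, so your kernel/cokernel analysis of ``multiplication by $h$ on $A$'' does not directly apply to $H$.

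The gap is closable, but requires an actual argument. Decomposing $A\otimes\VV\cong\bigoplus_{i=0}^n S^{-2i}\VV$, the morphism $H$ becomes a bidiagonal $(n+1)\times(n+1)$ matrix with entries in $\hom_{H(\AA)}(V,V)\cong A$: the subdiagonal entries, coming from $\bar h\otimes e_V$, are identity module maps, while the diagonal entries, coming from $-\id\otimes h$, are multiplication by $h$. Because the subdiagonal entries are units and $h$ is nilpotent, one can conjugate away the diagonal by a pair of upper-triangular automorphisms of the source and target (explicitly, postcomposing with $\sum_k h^k(N^+)^k\otimes e_V$, where $N^+$ is the upper shift on $A$, followed by a suitable column operation). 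Only after this change of basis does $H$ reduce to $\bar h\otimes e_V$, and only then is $\HH_\VV\simeq\VV\oplus S^{-(2n+1)}\VV$ justified. You flag this as ``the main obstacle'' in your final paragraph, but the body of the proof asserts the reduction rather than proving it, and as stated the assertion is false.
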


We first recall a basic algebraic lemma that we shall need.

\begin{lem}
\label{thm:quis}
If $f \colon V \to W$ is a map of chain complexes such that $f$ is surjective
and $\ker f$ is acyclic, then $f$ is a quasi-isomorphism.
\end{lem}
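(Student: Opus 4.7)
The plan is to exploit the short exact sequence of chain complexes
\[ 0 \to \ker f \to V \xrightarrow{f} W \to 0, \]
which is well-defined precisely because $f$ is assumed surjective. This sequence induces the standard long exact sequence in cohomology
\[ \cdots \to H^k(\ker f) \to H^k(V) \xrightarrow{H^k(f)} H^k(W) \to H^{k+1}(\ker f) \to \cdots, \]
and the acyclicity hypothesis gives $H^*(\ker f) = 0$. By exactness, each map $H^k(f)$ is flanked by zeros and must therefore be an isomorphism, which is precisely the statement that $f$ is a quasi-isomorphism.

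If one prefers a proof that avoids invoking the long exact sequence as a black box, the same conclusion follows from a direct two-step diagram chase. For surjectivity of $H^k(f)$, lift a cocycle $w \in W^k$ to some $v \in V^k$ using the assumed surjectivity of $f$; then $f(\partial v) = \partial w = 0$, so $\partial v$ lies in $(\ker f)^{k+1}$. Since $\partial v$ is automatically a cocycle in $\ker f$ and the latter is acyclic, write $\partial v = \partial u$ for some $u \in (\ker f)^k$. Then $v - u$ is a cocycle in $V^k$ still mapping to $w$, as required. Injectivity of $H^k(f)$ is entirely analogous: if $v \in V^k$ is a cocycle with $f(v) = \partial w$, lift $w$ to $v' \in V^{k-1}$, observe that $v - \partial v'$ lies in $\ker f$ and is a cocycle there, hence a boundary by acyclicity, so that $v$ itself is a boundary in $V$.

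The main obstacle here is essentially nonexistent: this is a routine piece of homological algebra that depends only on abstract exactness and transfers verbatim between cohomological and homological grading conventions. The lemma is presumably recorded as a separate statement purely to isolate the single input that will be needed in the proof of Proposition \ref{thm:shift}.
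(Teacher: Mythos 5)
Your proof is correct: both the long-exact-sequence argument and the explicit diagram chase are valid, and the paper itself states Lemma \ref{thm:quis} without proof, treating it as standard homological algebra. Your argument is exactly the expected one, so nothing further is needed.
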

\begin{proof}[Proof of Proposition \ref{thm:shift}]
Following Remark \ref{rmk:strict}, we may replace the $\hom_{\AA}(V,Y)$ terms in $\Phi_V\YY$ with $\hom_{H(\AA)}(V,Y)$.  This vector space has a basis given by $e_V, h, \ldots, h^n$ so that we replace $\Phi_V \VV$ with the quasi-isomorphic
\[
\bigoplus_{i=0}^n h^i[-2i] \VV \oplus \bigoplus_{i=0}^n h^i[-2i+1] \VV \oplus \VV.
\]
There is a module homomorphism $\pi_1$ that to first-order is a projection annihilating the summands $e_V[1] \VV \oplus \VV$ and has higher-order terms zero.  We want to apply Corollary $\ref{thm:cohomology}$ to $\pi_1$, so let $\partial(b) = (-1)^{|b|}\mu^1_{\Phi_V\YY}(b)$.  Now, if an element $(0,e_V \otimes v_2, v_3)$ of the kernel of $\pi_1$ is $\partial$-closed, then
$\partial (0,-e_V \otimes v_3,0)=(0,e_V \otimes v_2, v_3)$, so by Lemma \ref{thm:quis}, $\pi_1$ is a quasi-isomorphism.

This means $\Phi_V \VV$ is quasi-isomorphic to the image of $\pi_1$,
\[
\bigoplus_{i=0}^n h^i[-2i] \VV \oplus \bigoplus_{i=1}^n h^i[-2i+1] \VV. 
\]
We can project once more so as to kill the summands $e_V \VV \oplus h[-1]\VV$.  A similar argument shows
that this is a quasi-isomorphism.  By repeating this process, removing pairs of summands by a series of projection quasi-isomorphisms, we can remove everything except $h^n[-2n] \VV$.

The second fact is trivial from the definition of $\Phi_V$.
\end{proof}

\begin{rmk}
These results coincide with what one finds geometrically: namely that $\phi_V$ acts on itself by a shift in grading by $2n$ \cite{seidelgradings}, and if $W$ and $V$ are disjoint Lagrangians, then we can arrange that $\phi_V$ is supported in a region disjoint from $W$ so that $\phi_V$ has no effect on $W$.
\end{rmk}

\section{\texorpdfstring{$\Phi_V$ is a quasi-equivalence}{Phi is a quasi-equivalence}}
\label{sec:equivalence}
In the case where $\AA$ itself is a triangulated $\ainf$-category, the discussion in \cite[Section 3d]{seidelbible} shows that we can define an $\ainf$-functor, which we shall also denote $\Phi_V$, on $\AA$ itself in such a way that
\begin{equation}
\label{eqn:pullback}
\xymatrix{
{\AA} \ar[r]^{\ell} \ar[d]^{\Phi_V} & {\QQ} \ar[d]^{\Phi_V} \\
{\AA} \ar[r]^{\ell}  & {\QQ}
}
\end{equation}
commutes (up to isomorphism in $H^0(fun(\AA,\QQ))$).  In this section we shall prove
\begin{thm}
\label{thm:pnmain}
If $V$ is a $\CC\pp^n$-object in a cohomologically finite $\ainf$-triangulated category $\AA$, then $\Phi_V \colon
\AA \to \AA$ is a quasi-equivalence.
\end{thm}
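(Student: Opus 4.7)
\emph{Strategy.} The plan is to prove that $H^0(\Phi_V)$ is an equivalence of triangulated categories, which (since $\Phi_V$ is c-unital) is equivalent to $\Phi_V$ being a quasi-equivalence. The argument decomposes naturally into verifying essential surjectivity and full faithfulness. Proposition \ref{thm:shift} already provides the two fundamental computations on which everything is built: $\Phi_V V \cong S^{-2n} V$, and $\Phi_V W \cong W$ for every $W$ in the orthogonal $V^\perp := \{W \in \AA : \hom_{H(\AA)}(V, W) = 0\}$.

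\emph{Essential surjectivity.} The essential image of $\Phi_V$ is a triangulated subcategory of $\AA$; by the above it contains $V$ (via $\Phi_V(S^{2n}V) \cong V$) and all of $V^\perp$. I would then show that $\{V\} \cup V^\perp$ generates $\AA$ under cones and shifts: for any $Y$, form the cone of the evaluation $\hom_\AA(V, Y) \otimes V \to Y$ and iterate. Cohomological finiteness of $\AA$, combined with the fact that $\hom_{H(\AA)}(V, V) = \KK[h]/h^{n+1}$ is finite-dimensional, bounds the number of iterations needed before the result lies in $V^\perp$. Hence the essential image exhausts $\AA$.

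\emph{Full faithfulness.} For full faithfulness, I would show that the induced map $H\hom_\AA(Y, Z) \to H\hom_\AA(\Phi_V Y, \Phi_V Z)$ is an isomorphism for all pairs $Y, Z$. Since $\Phi_V$ is an exact $\ainf$-functor, both sides are cohomological in each variable, so a five-lemma argument over the triangles used above reduces the claim to the case $Y, Z \in \{V\} \cup V^\perp$. The subcase $Y, Z \in V^\perp$ is immediate from $\Phi_V \cong \id$ there; the subcase $Y = Z = V$ follows from Proposition \ref{thm:shift} combined with a direct verification that $\Phi_V$ acts as the identity on $\hom_{H(\AA)}(V, V) = \KK[h]/h^{n+1}$ after identifying via the shift; the mixed cases follow by combining the explicit formulas for $\Phi_V$ on morphisms with $\hom_{H(\AA)}(V, Z) = 0$ (or its dual), which makes the extra cone summands collapse.

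\emph{Principal obstacle.} The main technical hurdle is managing the combinatorics of the doubly-stacked cone defining $\Phi_V \YY$: the chain-level map on morphisms is built from several higher $\mu$-operations and the sign bookkeeping across the two layers is delicate. A cleaner alternative, closer in spirit to \cite{huybrechtsthomas}, would be to construct an explicit candidate inverse $\Psi_V$ using the dual evaluation $ev^\vee$ and the nondegenerate pairing $\int$ from the definition of a $\CC\pp^n$-object, and then verify $\Phi_V \Psi_V \cong \id \cong \Psi_V \Phi_V$ on the spanning class above. This converts the hom-space computation into two object-level comparisons, the hardest of which is matching $\Phi_V \Psi_V V$ with $V$ via the Serre-type duality on $\hom^*_{H(\AA)}(V, V)$.
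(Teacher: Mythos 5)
Your argument for essential surjectivity contains a genuine gap. You claim $\{V\}\cup V^\perp$ generates $\AA$ under cones and shifts by iterating the cone of the evaluation $\hom_{\AA}(V,Y)\otimes V\to Y$, with cohomological finiteness bounding the iteration. But this does not work: because $V$ has nontrivial higher self-morphisms ($\hom_{H(\AA)}(V,V)=\KK[h]/h^{n+1}$), the cone of evaluation is \emph{not} annihilated by $\hom_{H(\AA)}(V,-)$, and the iteration does not terminate. (The Beilinson-style semiorthogonal argument you have in mind only works when $V$ is exceptional. Indeed, the reason the $\CC\pp^n$-twist has the elaborate double-cone structure $\eqref{eqn:twistedcxphi}$, rather than being the simple evaluation cone, is precisely that the simple cone does not behave well here.) The paper proves that $\{V\}\cup V^\perp$ is a \emph{spanning class} (Lemma \ref{thm:span}), which is a strictly weaker notion than a generating set: it detects zero objects but cannot support a five-lemma reduction. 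Your full faithfulness argument inherits the same gap, since it reduces over the same (non-generating) class.

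The paper's route is different and is worth internalizing. For essential surjectivity, one uses the triangle $\HH_Y\to Y\to\Phi_V Y$ from $\eqref{eqn:twistedphi}$ directly: $\Phi_V Y$ lies in the essential image $\BB$ by definition; $\HH_Y$ is built from shifted copies of $V$, and $V\in\BB$ by Proposition \ref{thm:shift}; hence $Y\in\BB$, so $\BB=Tw\AA$. No generation claim about $\{V\}\cup V^\perp$ is needed. For full faithfulness, the missing ingredient in your proposal is the Bridgeland criterion: an exact functor with left and right adjoints is fully faithful if and only if it restricts to an isomorphism on hom-spaces between members of a spanning class. This is exactly the theorem that lets a spanning class (rather than a generating set) suffice, and it is why the construction of the adjoint $\Phi_V^\vee$ is a central step of the proof rather than, as you frame it, a cleaner alternative. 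Your ``principal obstacle'' paragraph therefore identifies the correct hard point (the explicit adjoint built from $ev^\vee$ and the pairing $\int$), but misjudges its role: it is not optional, and once constructed one does not verify $\Phi_V\Psi_V\cong\id$ directly but instead feeds the adjunction into Bridgeland's theorem together with Proposition \ref{thm:shift}.
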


To prove this, it will be useful to have an explicit formula for $\Phi_V$ on the level of twisted complexes.  In order to do this, we have to assume that $\AA$ is strictly unital.  The more general c-unital case later will be discussed later.  The diagram \eqref{eqn:pullback} can now be augmented to the following:
\begin{equation}
\label{eqn:pullback+}
\xymatrix{
{\AA} \ar[r] \ar@{.>}[d]^{\Phi_V} & {Tw\AA} \ar[d]^{\Phi_V} \ar[r]^{\tilde{\ell}} & {\QQ} \ar[d]^{\Phi_V} \\
{\AA} \ar[r]               & {Tw\AA} \ar[r]^{\tilde{\ell}}                   & {\QQ.}
}
\end{equation}
We shall define an $\ainf$-functor $\Phi_V$ on $Tw\AA$ such that the righthand square precisely commutes.  Then, in the case where $\AA$ is triangulated, the inclusion into $Tw\AA$ is a quasi-equivalence so can be inverted \cite[Theorem 2.9]{seidelbible}, which allows us to pullback $\Phi_V$ to $\AA$.

In order to imitate the construction of Section \ref{sec:twists} we need to define a map $H \colon \hom_{Tw\AA}(V,Y) \otimes V \to \hom_{Tw\AA}(V,Y) \otimes V$.  This means $H$ must be an element of $\End_{\KK}(\hom_{Tw\AA}(V,Y)) \otimes \hom_{Tw\AA}(V,V)$.  Let $\bar{h} \in \End_{\KK}(\hom_{Tw\AA}(V,Y))$ be the linear map $a \mapsto \mu^2(a,h)$, and define
\[
H = \bar{h} \otimes e_V - \id \otimes h.
\]
With this we can consider the diagram
\begin{equation}
\label{eqn:twistedphi}
\xymatrix{
{\hom_{Tw\AA}(V,Y)[-2] \otimes V} \ar[r]^{H} & {\hom_{Tw\AA}(V,Y) \otimes V} \ar[r]^{\:\:\:\:\:\:\:\:\:\:\:\:\:\:\:\iota}
\ar[dr]^{ev} & {\HH_{Y}} \ar[d]^{g}  \\
                   &                               & {Y} \ar[d] \\
                   &                               & {\Phi_VY.}
}
\end{equation}
As in \eqref{eqn:phi}, $\mathcal{H}_Y=Cone(H)$ and $\Phi_VY=Cone(g)$, where now $g$ is now given by $ev$ on the second summand of $\HH_Y$ and zero on the first summand.  It is straightforward to verify that the above diagram becomes \eqref{eqn:phi} under $\tilde{\ell}$.  We have now defined a twisted complex
\begin{equation}
\label{eqn:twistedcxphi}
\Phi_V Y = \left( \begin{aligned}  &\hom_{Tw\AA}(V,Y) \otimes V \\  &\oplus \hom_{Tw\AA}(V,Y)[1] \otimes V \\  & \oplus Y \end{aligned}\:
, \begin{pmatrix}
  \delta_{\hom_{Tw\AA}(V,Y)\otimes V} & 0 & 0 \\
  -S^2(H) & -\delta_{\hom_{Tw\AA}(V,Y)\otimes V} & 0 \\
  0 & -S(ev) & \delta_Y
\end{pmatrix}
\right).
\end{equation}
Also, given $t \in \hom_{Tw\AA}(Y,Z)$, we get $\Phi_Vt \in \hom_{Tw\AA}(\Phi_VY, \Phi_VZ)$ given with respect to the above splittings by
\[
\begin{pmatrix}
  (-1)^{|t|}\bar{t}\otimes e_V & 0 & 0\\
  \accentset{\triangle}{t}\otimes e_V & \bar{t}\otimes e_V & 0 \\
  0 & 0 & t
\end{pmatrix},
\]
where $\bar{t} \colon \hom_{Tw\AA}(V,Y) \to \hom_{Tw\AA}(V,Z)$ is given by $a \mapsto (-1)^{|a|}\mu^2(t,a)$ and $\accentset{\triangle}{t}$ denotes the map $a \mapsto \mu^3(t,a,h)$.  This now defines an $\ainf$-functor $\Phi_V$ on $Tw\AA$ which has only first-order terms (it is a dg functor).  We leave it to the reader to check that the righthand square in \eqref{eqn:pullback+} commutes.

\subsection{Adjoints}
One of the benefits of making the assumption of strict unitality and working with twisted complexes is that is easy now to identify an adjoint twist functor to $\Phi_V$.  We recall that, given a pair of functors $F: \mathcal{D} \rightarrow \mathcal{C}$ and $G: \mathcal{C} \rightarrow \mathcal{D}$, we say that $F$ is left adjoint to $G$ (and $G$ is right adjoint to $F$) if there are isomorphisms $\hom_{\mathcal{C}}(FY,X) \cong \hom_{\mathcal{D}}(Y,GX)$ which are natural in $X$ and $Y$. 
    
Consider the following diagram
\begin{eqnarray}
\label{eqn:adjoint}
\xymatrix{
& & S^{-1}Y \ar[dl]^{ev ^{\vee}} \ar[d]^{g^{\vee}} \\
{\hom_{Tw\AA}(Y,V)[1]^{\vee} \otimes V} \ar[r]^{H^{\vee}} & {\hom_{Tw\AA}(Y,V)[-1]^{\vee} \otimes V}
\ar[r]^{\:\:\:\:\:\:\:\:\:\:\iota}  & {Cone(H^{\vee})} \ar[d]  \\
                   &                               & {Cone(g^{\vee})}
}
\end{eqnarray}
Define $h^{\vee} \colon \hom_{Tw\AA}(Y,V)[-2]^{\vee} \to \hom_{Tw\AA}(Y,V)^{\vee}$ by $h^{\vee}(\eta)(a)=\eta(\mu^2(h,y))$. Now let $H^{\vee}=h^{\vee}\otimes e_V - \id \otimes h$ and $g^{\vee}=(0,ev^{\vee})$.  We define $\HH^{\vee}_Y= Cone(H^{\vee})$ and $\Phi^{\vee}_V Y = Cone(g^{\vee})$.  $\Phi^{\vee}_V Y$ is given by the twisted complex 
\[
\left( \begin{aligned} &Y  \\ & \oplus \hom_{Tw\AA}(Y,V)[2]^{\vee} \otimes V \\ & \oplus \hom_{Tw\AA}(Y,V)[-1]^{\vee} \otimes V \end{aligned} \:,
\begin{pmatrix}
 \delta_Y & 0 & 0 \\
 0&  \delta_{\hom_{Tw\AA}(V,Y)^{\vee} \otimes V} & 0  \\
  ev^{\vee} & H^{\vee} & \delta_{\hom_{Tw\AA}(V,Y)^{\vee} \otimes V}
\end{pmatrix}\right).
\]
Given  $t \in \hom_{Tw\AA}(Y,Z)$, we similarly get $\Phi^{\vee}_Vt \in \hom_{Tw\AA}(\Phi^{\vee}_VY, \Phi^{\vee}_VZ)$, so that $\Phi^{\vee}_V$ is a (dg) functor on $Tw\AA$.

\begin{prop}
$H\Phi_V^{\vee}$ is both left and right adjoint to $H\Phi_V$.
\end{prop}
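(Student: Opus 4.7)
Proof plan. To establish that $H\Phi_V^{\vee}$ is right adjoint to $H\Phi_V$, one must exhibit a natural isomorphism $\hom_{H(Tw\AA)}(\Phi_V Y, X) \cong \hom_{H(Tw\AA)}(Y, \Phi_V^{\vee} X)$; the left adjoint statement $\hom_{H(Tw\AA)}(\Phi_V^{\vee} Y, X) \cong \hom_{H(Tw\AA)}(Y, \Phi_V X)$ is strictly analogous. My plan is to build both isomorphisms by applying the hom-functors to the defining exact triangles of $\Phi_V$ and $\Phi_V^{\vee}$, and then matching the pieces via the $\CC\pp^n$-Serre duality that is baked into the definition of a $\CC\pp^n$-object.

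First I would apply $\hom_{H(Tw\AA)}(-,X)$ to the triangle $\HH_Y \xrightarrow{g} Y \to \Phi_V Y$ and $\hom_{H(Tw\AA)}(Y,-)$ to the triangle $S^{-1}X \xrightarrow{g^{\vee}} \HH^{\vee}_X \to \Phi_V^{\vee} X$. The resulting long exact sequences both feature $\hom(Y,X)$, so by the five lemma it suffices to produce a natural isomorphism $\hom(\HH_Y, X) \cong \hom(Y, \HH^{\vee}_X)$ compatible with $g^{*}$ and $(g^{\vee})_{*}$. Unpacking $\HH_Y = Cone(H)$ and $\HH^{\vee}_X = Cone(H^{\vee})$ via their own defining triangles reduces the problem further to matching hom-spaces of the form $\hom(\hom(V,Y)[k] \otimes V, X)$ with $\hom(Y, \hom(X,V)[\ell]^{\vee} \otimes V)$, together with the connecting maps induced by $H$ and $H^{\vee}$.

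Next I would invoke two types of identification at the cohomological level. For any finite-dimensional graded $\KK$-vector space $W$ one has $\hom_{H(Tw\AA)}(W \otimes V, X) \cong W^{\vee} \otimes \hom_{H(Tw\AA)}(V, X)$ and $\hom_{H(Tw\AA)}(Y, W^{\vee} \otimes V) \cong W^{\vee} \otimes \hom_{H(Tw\AA)}(Y, V)$, obtained by pulling the finite-dimensional factor out of the module $W \otimes V$. The $\CC\pp^n$-Serre pairing then provides $\hom_{H(\AA)}(V, Y)^{\vee} \cong \hom_{H(\AA)}(Y, V)[2n]$, and symmetrically for $X$. Combining these, both $\hom(\HH_Y, X)$ and $\hom(Y, \HH^{\vee}_X)$ become, up to a global shift by $2n$, iterated cones built on $\hom(V,X) \otimes \hom(Y,V)$, so that the two sides are abstractly isomorphic as graded vector spaces.

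The real content is compatibility of the connecting maps. The map $H = \bar{h} \otimes e_V - \id \otimes h$ encodes commutation with $h$ acting on $\hom(V,Y)$, while $H^{\vee} = h^{\vee} \otimes e_V - \id \otimes h$ encodes the transposed action on $\hom(X,V)^{\vee}$; under the Serre pairing, left and right multiplication by $h$ are exchanged (up to the sign coming from $|h|=2$), so $H$ and $H^{\vee}$ correspond. A parallel check relates $g$ with $g^{\vee}$ via the duality between $ev$ and $ev^{\vee}$. Naturality in $X$ and $Y$ is inherited from each step, and the left adjoint statement follows by running the same argument with the roles of $X$ and $Y$ interchanged, using that the $\CC\pp^n$-pairing is symmetric in its two arguments. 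The main obstacle I anticipate is the sign and shift bookkeeping in this compatibility check: although $\Phi_V$ and $\Phi_V^{\vee}$ were arranged as dg functors, the Serre duality is defined only cohomologically, so one must invoke Remark \ref{rmk:strict} to pass to chain-level representatives and confirm that the higher $\ainf$-corrections do not disturb the comparison. This parallels the corresponding computation of Huybrechts and Thomas for derived categories of coherent sheaves, adapted to the $\ainf$-setting.
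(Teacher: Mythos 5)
Your overall strategy is the same as the paper's: apply $\hom_{D\AA}(-,Z)$ and $\hom_{D\AA}(Y,-)$ to the two-step defining triangles for the twist and cotwist, get nested long exact sequences, match the corresponding terms by pulling the finite-dimensional hom-factor out of the tensor, and conclude by (repeated) five lemma. The comparison is worth spelling out, though, because you reach for more than you need on one side and slightly misstate what happens on the other.

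The paper proves \emph{left} adjointness first, i.e.\ $\hom_{D\AA}(\Phi_V^\vee Y,Z)\cong\hom_{D\AA}(Y,\Phi_V Z)$, and the key identification there is purely formal: since $\Phi_V^\vee$ was \emph{defined} with a $\hom(Y,V)^{\vee}\otimes V$ term, one only needs the finite-dimensional double-dual identity $\hom(Y,V)^{\vee\vee}\cong\hom(Y,V)$ to see both sides reduce to $\hom(Y,V)\otimes\hom(V,Z)$; the $\CC\pp^n$-Serre pairing is \emph{not} used for this direction, only cohomological finiteness of $\AA$ is. You instead start with \emph{right} adjointness, $\hom(\Phi_V Y,X)\cong\hom(Y,\Phi_V^\vee X)$, where the pieces are $\hom(V,Y)^\vee\otimes\hom(V,X)$ and $\hom(X,V)^\vee\otimes\hom(Y,V)$, and there the Serre pairing genuinely is needed, applied to \emph{both} tensor factors so that the two shifts by $2n$ cancel. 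Your phrase ``up to a global shift by $2n$'' is therefore misleading: there is no residual shift, because the $[2n]$ from dualizing $\hom(V,Y)$ and the $[-2n]$ from de-dualizing into $\hom(V,X)$ cancel, which is exactly what makes the adjunction come out ungraded. Your claim that the left-adjoint direction is ``strictly analogous'' is not wrong, but it hides the fact that that direction is strictly easier and does not use the $\CC\pp^n$-structure at all; if one only has the formal double-dual identity available, one gets left adjointness but not right adjointness for free.

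On the positive side, you correctly flag that the real content beyond matching graded vector spaces is compatibility of the connecting maps --- that $H$ and $H^{\vee}$ correspond under the identification, and likewise $g$ (built from $ev$) and $g^{\vee}$ (built from $ev^{\vee}$) --- which the paper asserts implicitly via naturality of the long exact sequences without writing it out. Your concern about passing between chain-level and cohomological statements is also legitimate in spirit, but in this setting it is resolved exactly as the paper does: the adjunction is only being asserted at the level of $H\Phi_V$, $H\Phi_V^{\vee}$ on $D\AA$, so one never needs a chain-level adjunction, and Remark~\ref{rmk:strict} is what lets one replace $\hom_{Tw\AA}(V,Y)$ by its cohomology in the twisted complexes before taking hom-spaces. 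In short, correct and in the same spirit, but you could simplify by proving the left adjoint first with no Serre duality and then invoking the pairing only where it is actually forced, and you should delete the spurious $2n$-shift.
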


\begin{proof}
We first prove that $H\Phi_V^{\vee}$ is left adjoint to $H\Phi_V$.  We want to show there are isomorphisms
\[
\hom_{D\AA}(\Phi^{\vee}_V Y,Z) \cong \hom_{D\AA}(Y, \Phi_V Z)
\]
that are natural in $D\AA$.  By applying the exact functors $\hom_{D\AA} (-,Z)$ to \eqref{eqn:adjoint} and $\hom_{D\AA} (Y,-)$ to \eqref{eqn:twistedphi}, we get long exact sequences, natural in $D\AA$,
\[
\xymatrix@C=-3.5em{
{\hom_{D\AA}(\HH^{\vee}_Y,Z)}
\ar[rr]
\ar@{.>}[dd]
&& {\hom_{D\AA}(\hom_{D\AA}(Y,V)[-1]^{\vee} \otimes V,Z)}
\ar[dl] \ar[dd] \\
& {\hom_{D\AA}(\hom_{D\AA}(Y,V)[1]^{\vee} \otimes V,Z)} \ar[ul]^{[1]} \ar[dd] \\ 
{\hom_{D\AA}(Y,S\HH_Z)}
\ar'[r][rr]
&& {\hom_{D\AA}(Y,\hom_{D\AA}(V,Z)[-1] \otimes V)}
\ar[dl] \\
& {\hom_{D\AA}(Y,\hom_{D\AA}(V,Z)[1]\otimes V)} \ar[ul]^{[1]}  \\ }
\]
%
Here the vertical isomorphisms come from the natural identities
\begin{align*}
\hom_{D\AA}(\hom_{D\AA}(Y,V)^{\vee} \otimes V,Z) &= \hom_{D\AA}(Y,V)^{\vee \vee} \otimes \hom_{D\AA}(V,Z)\\
&= \hom_{D\AA}(Y,V) \otimes \hom_{D\AA}(V,Z)\\
&= \hom_{D\AA}(Y, \hom_{D\AA}(V,Z) \otimes V)
\end{align*}
so that $\hom_{D\AA}(\HH^{\vee}_Y,Z) \cong \hom_{D\AA}(Y,\HH_Z)$ naturally (note that this requires that $\AA$ be cohomologically finite).  This proves in particular that the functor assigning $Y$ to  $\mathcal{H}^{\vee}_Y$ is left adjoint to the functor sending $Y$ to $\mathcal{H}_Y$ (these functors are defined by the obvious restriction of the above construction).

Similarly we have
\[
\xymatrix@C=-1.5em{
{\hom_{D\AA}(\Phi^{\vee}_V Y,Z)}
\ar[rr]
\ar@{.>}[dd]
&& {\hom_{D\AA}(\HH_Y^{\vee},Z)}
\ar[dl] \ar[dd] \\
& {\hom_{D\AA}(S^{-1}Y,Z)} \ar[ul]^{[1]} \ar[dd] \\ 
{\hom_{D\AA}(Y, \Phi_V Z)}
\ar'[r][rr]
&& {\hom_{D\AA}(Y,S\HH_Z)}
\ar[dl] \\
& {\hom_{D\AA}(Y,SZ)} \ar[ul]^{[1]}  \\ }
\]
and therefore
\[
\hom_{D\AA}(\Phi^{\vee}_V Y,Z) \cong \hom_{D\AA}(Y, \Phi_V Z)
\]
naturally.  Proving right adjointness is similar.
\end{proof}

With the existence of adjoints proven, the rest of the proof of Theorem \ref{thm:pnmain} is an exercise in the abstract machinery of triangulated categories.

\subsection{Spanning classes}
A nontrivial collection $\Omega$ of objects in a triangulated category $\mathcal{D}$ is called a
spanning class if, for all $B \in \mathcal{D}$, we have
\begin{itemize}
\item If $\hom_{\dd}(A,B[i]) =0$ for all $A \in \Omega$ and all $i \in \ZZ$, then $B
\simeq 0$.
\item If $\hom_{\dd}(B[i],A) =0$ for all $A \in \Omega$ and all $i \in \ZZ$, then $B \simeq 0$.
\end{itemize}

Given an object $A \in \dd$, we denote by $A^{\perp} = \{ B : \hom^*_{\dd}(A,B)=0 \}$ and can define $^{\perp}A$ similarly.

\begin{lem}
\label{thm:span}
For a $\CC\pp^n$-object $V \in \AA$, $\{V\} \cup V^{\perp}$ is a spanning class in
$D\AA$.
\end{lem}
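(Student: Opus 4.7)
The plan is to exploit the self-referential nature of the putative spanning class: the hypothesis will force $B$ itself to lie in $V^{\perp}$, hence in $\Omega = \{V\} \cup V^{\perp}$, and then feeding $A = B$ back into the hypothesis immediately kills $\id_B$, so $B \simeq 0$. This bypasses any direct structural analysis of $B$ as a twisted complex.

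For the first spanning condition, I would specialize the hypothesis $\hom_{D\AA}(A, B[i]) = 0$ to $A = V$, which gives $\hom(V, B[i]) = 0$ for all $i$ and places $B$ in $V^{\perp} \subset \Omega$. Then, specializing to $A = B$ and $i = 0$ yields $\hom(B,B) = 0$, so $\id_B$ vanishes and $B \simeq 0$ in $D\AA$.

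For the second condition, specializing $\hom_{D\AA}(B[i], A) = 0$ to $A = V$ gives $\hom(B[i], V) = 0$ for all $i$, equivalently $\hom^m_{D\AA}(B, V) = 0$ for every $m \in \ZZ$. To convert this into the ``other-sided'' vanishing $\hom^*(V, B) = 0$ needed to place $B$ in $V^{\perp}$, I would invoke the nondegenerate pairing built into the definition of a $\CC\pp^n$-object: for each $k$, the pairing $\hom^{2n-k}(X,V) \times \hom^k(V,X) \to \KK$ is perfect. Applying this with $X = B$ trades vanishing of $\hom^*(B,V)$ for vanishing of $\hom^*(V,B)$, placing $B$ in $V^{\perp}$; the self-referential trick then finishes the job as before.

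The main obstacle is that the pairing is only axiomatic for $X \in \AA$, whereas here it must be applied to an arbitrary $B \in D\AA$, i.e.\ to a twisted complex built out of objects of $\AA$. I expect this extension to follow by induction on the length of such a complex: an exact triangle $X_0 \to X_1 \to X_2$ in $D\AA$ produces, after applying $\hom(-,V)$ and $\hom(V,-)$, two long exact sequences linked compatibly by the $\CC\pp^n$-pairing, so the five-lemma propagates perfectness of the pairing from $X_0$ and $X_1$ to $X_2$. The cohomological finiteness hypothesis in Theorem~\ref{thm:pnmain} is exactly what makes this duality-by-induction behave well, since all the relevant $\hom$-spaces remain finite-dimensional and so dualization is an honest inverse operation.
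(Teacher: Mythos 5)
Your proof is correct and follows essentially the same route as the paper: specialize the hypothesis to $A=V$, use the nondegenerate pairing from the $\CC\pp^n$-object axioms to convert one-sided vanishing of morphisms against $V$ into two-sided vanishing and conclude $B\in V^{\perp}\subset\Omega$, and then feed $A=B$ back in to kill $\id_B$. The one place you diverge is the closing paragraph: the paper does not need (and does not give) the five-lemma induction you propose, because the lemma is stated and used in the setting of Theorem~\ref{thm:pnmain}, where $\AA$ is already a triangulated $\ainf$-category; there $D\AA\simeq H^0(\AA)$, every object of $D\AA$ is isomorphic to (the image of) some $X\in\AA$, and the ``for any $X$'' clause in the definition of $\CC\pp^n$-object already delivers the nondegenerate pairing for all $B\in D\AA$ directly. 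Your induction is nonetheless a sound fallback that would cover the more general situation in which $\AA$ is not assumed triangulated and one genuinely works with twisted complexes; it just isn't required here.
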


\begin{proof}
Suppose we have $B$ such that  $\hom_{D\AA}(A,B[i]) =0$ for all $A \in \Omega$ and all
$i \in \ZZ$.  Then putting $A = V$ shows that $B \in V^{\perp}$.  Therefore, in
particular, $\hom_{D\AA}(B,B[i])=0$ for all $i$ so that $B \simeq 0$.
For the other condition, note that, by the definition of $\CC\pp^n$-object, $\hom_{D\AA}(V,A)=0$, if and only if
$\hom_{D\AA}(A,V)=0$.
\end{proof}

\subsection{Equivalence}
We now appeal to the following theorem of Bridgeland \cite[Theorem 2.3]{bridgeland}

\begin{thm} Let $F \colon \cc \to \dd$ be an exact functor between $\KK$-linear triangulated categories such that $F$ has a left and a right adjoint. Then $F$ is fully faithful if and only if there exists some spanning class $\Omega \subset \cc$ such that, for all objects $K,L \in \Omega$ and all $i \in \ZZ$ the natural homomorphism
\[
F \colon \hom_{\cc}(K,L[i]) \to \hom_{\dd}(F(K),F(L[i]))
\] 
is an isomorphism
\end{thm}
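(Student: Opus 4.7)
The forward implication is immediate: a fully faithful functor induces isomorphisms on all morphism spaces, in particular between objects of $\Omega$. The content is the converse, and the plan is to use the two adjoints in turn, together with the two halves of the spanning class condition.

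Write $G \dashv F \dashv H$ with unit $\eta \colon \id_{\cc} \to HF$ and counit $\epsilon \colon GF \to \id_{\cc}$. The first observation is that, under the adjunction isomorphism $\hom_{\dd}(FK, FL) \cong \hom_{\cc}(K, HFL)$, the map $F \colon \hom_{\cc}(K,L) \to \hom_{\dd}(FK,FL)$ becomes $(\eta_L)_* \colon \hom_{\cc}(K,L) \to \hom_{\cc}(K,HFL)$, i.e.\ post-composition with $\eta_L$; similarly, via the other adjunction it becomes $(\epsilon_K)^* \colon \hom_{\cc}(K,L) \to \hom_{\cc}(GFK,L)$.

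Fix $L \in \Omega$, and let $C_L$ be the cone of $\eta_L \colon L \to HF(L)$. For any $K \in \Omega$ and any $i \in \ZZ$, the hypothesis together with the identification above forces $(\eta_L[i])_* \colon \hom_{\cc}(K,L[i]) \to \hom_{\cc}(K,HFL[i])$ to be an isomorphism. Feeding the triangle $L \to HFL \to C_L \to L[1]$ into $\hom_{\cc}(K,-)$ and reading off the long exact sequence, this yields $\hom_{\cc}(K, C_L[j]) = 0$ for every $K \in \Omega$ and every $j \in \ZZ$. The first half of the spanning class property then gives $C_L \simeq 0$, so $\eta_L$ is an isomorphism for every $L \in \Omega$. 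Consequently, for arbitrary $K \in \cc$ and $L \in \Omega$, the map $F \colon \hom_{\cc}(K,L[i]) \to \hom_{\dd}(FK,FL[i])$ is the isomorphism $(\eta_L[i])_*$.

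To remove the restriction on $L$, repeat the argument on the other side. Fix an arbitrary $K \in \cc$ and let $D_K$ be the cone of $\epsilon_K \colon GF(K) \to K$. The previous paragraph shows that, for every $L \in \Omega$ and every $i$, $(\epsilon_K)^* \colon \hom_{\cc}(K,L[i]) \to \hom_{\cc}(GFK,L[i])$ is an isomorphism. Applying $\hom_{\cc}(-,L[i])$ to the triangle $GFK \to K \to D_K \to GFK[1]$ yields $\hom_{\cc}(D_K[j],L) = 0$ for all $L \in \Omega$ and all $j$, whence the second half of the spanning class condition gives $D_K \simeq 0$, and $\epsilon_K$ is an isomorphism for \emph{every} $K \in \cc$. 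For any $K,L \in \cc$ we can then write $F = (\epsilon_K)^*$ as a composition with an isomorphism, so $F$ is fully faithful. The main subtlety is the bootstrapping pattern: one spanning condition plus the right adjoint upgrades the hypothesis from $(K,L) \in \Omega \times \Omega$ to $(K,L) \in \cc \times \Omega$, and only then does the other spanning condition plus the left adjoint let one enlarge $L$ to all of $\cc$; each extension step genuinely uses one of the two adjoints and one of the two vanishing axioms.
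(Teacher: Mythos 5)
The paper does not prove this statement at all: it is quoted verbatim as an external result, namely Bridgeland's criterion \cite[Theorem 2.3]{bridgeland}, and is then merely applied to the spanning class of Lemma \ref{thm:span}. So there is no internal proof to compare against; judged on its own, your argument is correct, and it is in essence the standard proof of Bridgeland's theorem (the one in Bridgeland's paper, also presented in Huybrechts' book on Fourier--Mukai transforms): identify $F$ on morphism spaces with $(\eta_L)_*$ via $F \dashv H$ and with $(\epsilon_K)^*$ via $G \dashv F$, kill the cone of $\eta_L$ for $L \in \Omega$ using the first orthogonality condition, use that to enlarge the hypothesis to all $K$, then kill the cone of $\epsilon_K$ for arbitrary $K$ using the second condition, so that the counit of $G \dashv F$ is an isomorphism and $F$ is fully faithful. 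The bootstrapping order you emphasise is exactly the right mechanism. The only point you pass over silently is the compatibility of the adjunctions with the shift functors: to identify $(\eta_{L[i]})_*$ with $(\eta_L[i])_*$ in the long exact sequence (and similarly for $\epsilon$) one uses that the right and left adjoints of an exact functor are again exact and that the unit and counit are graded natural transformations; this is standard and is glossed at the same level in the published proofs, so it is a remark rather than a gap.
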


For the spanning class from Lemma \ref{thm:span}, this condition follows immediately from Proposition \ref{thm:shift}, so $\Phi_V$ is cohomologically full and faithful.

To show that it is an quasi-equivalence, let $\BB \subset Tw\AA$ be the full $\ainf$-subcategory of objects isomorphic to $\Phi_VY$ for some $Y$.  Since $\Phi_V$ maps exact triangles in $H(Tw\AA)$ to exact triangles in $H(Tw\AA)$, $\BB$ is actually a triangulated $\ainf$-category.  On the other hand, from Proposition \ref{thm:shift}, $V \in \BB$ and so \eqref{eqn:twistedphi} shows that $\BB$ generates $Tw\AA$.  This means that the inclusion $\BB \to Tw\AA$ must be a quasi-equivalence, which implies that $\Phi_V$ is also a quasi-equivalence.

So far we have only dealt with the case when $\AA$ is strictly unital.  In the c-unital case, the standard trick \cite[Section 2]{seidelbible} is to pass to a quasi-equivalent $\ainf$-category $\widetilde{\AA}$ which is strictly unital and such that
\[
\xymatrixcolsep{.5in}
\xymatrixrowsep{.5in}
\xymatrix{
{D\AA} \ar[r]^{H\Phi_V} \ar[d]^{\cong} & {D\AA} \ar[d]^{\cong} \\
{D\widetilde{\AA}} \ar[r]^{H\widetilde{\Phi_V}}  & {D\widetilde{\AA}}
}
\]
commutes (up to isomorphism).  Then we can apply our result from the strictly unital case to complete the proof of Theorem \ref{thm:pnmain}.

\section{Some geometric consequences}
\label{sec:spheres}
\subsection{The connection with spherical objects}
As we mentioned in the Introduction, it would require a more substantial analysis to verify that $\Phi_V$ does in fact represent the categorical version of $\phi_V$.  However, in the lowest dimension when $V \cong \CC\pp^1$, this can be done by using Seidel's result resulting geometric Dehn twists and algebraic spherical twists \cite{seidelbible}, and the relationship \eqref{eqn:symp}.

We shall first recall the basic facts about spherical objects and spherical twists \cite[Section 5]{seidelbible}.

\begin{defn}
An object $V\in \AA$ is called spherical of dimension $n$ if
\begin{itemize}
\item $\hom_{H(\AA)}(V,V) \cong \mathbb{K}[t]/t^2$.
\item There exists a map $\int \colon \hom^n_{H(\AA)}(V,V) \to \mathbb{K}$ such
that, for all $X$, the resulting bilinear map $\hom^{n-k}_{H(\AA)}(X,V) \times
\hom^k_{H(\AA)}(V,X) \to \hom^n_{H(\AA)}(V,V) \to \mathbb{K}$ is nondegenerate.
\end{itemize}
\end{defn}

\begin{defn}
Given an object $V$, the twist map $T_V$ is defined by $\TT_V \YY = Cone(ev)$.
\end{defn}

This forms part of a functor $T_V \colon \QQ \to \QQ$ where, given $t \in \hom_{\QQ}(\YY,\zz)$, $\tilde{t} = T_V(t)$ has first order part
\[
\tilde{t}^1 \left( \begin{array}{cc}
y_1 \otimes v\\
y_2 \end{array}
\right)
=
\left( \begin{array}{cc}
(-1)^{|v|-1}t^1(y_1) \otimes v \\
t^1(y_2)  + t^2(y_1,v) \end{array}
\right)
\]
and
\begin{eqnarray*}
\tilde{t}^d \left( \left( \begin{array}{cc}
y_1 \otimes v\\
y_2 \end{array}
\right), \mathbf{a}_{d-1} \right)
=
\left( \begin{array}{cc}
0\\
t^d(y_2, \mathbf{a}_{d-1})  + t^{d+1}(y_1,v, \mathbf{a}_{d-1}) \end{array}
\right).
\end{eqnarray*}
If $\AA$ is triangulated, we may define the functor $T_V$ on $\AA$ and Seidel proves the following lemma:
\begin{lem}
\emph{(\cite[Lemma 5.11]{seidelbible})}
Given a spherical object $V$ in a c-finite triangulated $\ainf$-category $\AA$, the spherical twist $T_V$ is a
quasi-equivalence of $\AA$.
\end{lem}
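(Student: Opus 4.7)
The plan is to mirror the argument structure used for Theorem \ref{thm:pnmain}, which is slightly simpler here since the intermediate cone $\HH_Y$ is absent and there is no class $h$ to keep track of.

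First, I would pass (via the same reduction in the final paragraph of Section \ref{sec:equivalence}) to the strictly unital case, and produce an explicit adjoint $T_V^{\vee}$ at the level of twisted complexes. Namely, I would set
\[
T_V^{\vee} Y = Cone\bigl(ev^{\vee}\colon S^{-1}Y \to \hom_{Tw\AA}(Y,V)^{\vee}\otimes V\bigr),
\]
extended to a dg functor on $Tw\AA$ by the same matrix recipe as for $\Phi_V^{\vee}$. Then, applying the exact functors $\hom_{D\AA}(-,Z)$ and $\hom_{D\AA}(Y,-)$ to the defining triangles of $T_V^{\vee}Y$ and $T_VZ$ respectively, the resulting long exact sequences are identified termwise by the natural isomorphism
\[
\hom_{D\AA}\bigl(\hom_{D\AA}(Y,V)^{\vee}\otimes V,\, Z\bigr) \;\cong\; \hom_{D\AA}\bigl(Y,\,\hom_{D\AA}(V,Z)\otimes V\bigr),
\]
which requires c-finiteness of $\AA$ so that $\hom_{D\AA}(Y,V)^{\vee\vee}=\hom_{D\AA}(Y,V)$. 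A five-lemma argument (identical to the one used for $\Phi_V$) then gives that $HT_V^{\vee}$ is both left and right adjoint to $HT_V$.

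Second, I would compute $T_V$ on the spanning class $\{V\}\cup V^{\perp}$, which is a spanning class by the same proof as Lemma \ref{thm:span} (the sphericality hypothesis provides the nondegenerate pairing needed for the symmetry $\hom_{D\AA}(V,A)=0\Leftrightarrow\hom_{D\AA}(A,V)=0$). For $Y\in V^{\perp}$, the Yoneda module $\hom_{\AA}(V,Y)$ is acyclic, so by Remark \ref{rmk:strict} we may replace it with zero, making $T_VY\cong Y$. For $V$ itself, Remark \ref{rmk:strict} reduces $\hom_{Tw\AA}(V,V)\otimes V$ to $(\KK\,e_V\oplus\KK\,t[-n])\otimes V$, and then a sequence of projection quasi-isomorphisms of the type used in Proposition \ref{thm:shift} strips off the acyclic summand generated by $e_V\otimes V$ and $V$ under $ev$, leaving $T_VV\cong S^{1-n}V$. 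Either way, $T_V$ acts on morphisms between objects of the spanning class by an isomorphism (for $V\to V$ this is just multiplication by a unit on a shifted copy; on $V^{\perp}$ it is the identity).

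Third, Bridgeland's theorem quoted in Section \ref{sec:equivalence} immediately gives that $T_V$ is cohomologically fully faithful. For essential surjectivity, I would argue as at the end of Section \ref{sec:equivalence}: let $\BB\subset Tw\AA$ be the full $\ainf$-subcategory on objects isomorphic to $T_VY$ for some $Y$; then $\BB$ is triangulated because $T_V$ preserves exact triangles, it contains $V$ (a shift of $T_VV$), and the defining exact triangle $\hom_{Tw\AA}(V,Y)\otimes V\to Y\to T_VY$ shows that $\BB$ generates $Tw\AA$. Hence $\BB=Tw\AA$, so $T_V$ is a quasi-equivalence. Finally, passing back from strictly unital to c-unital via the same commutative square of quasi-equivalences completes the proof.

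The main obstacle is really only the construction and verification of the adjoint $T_V^{\vee}$, which is what forces the c-finiteness hypothesis; everything else is a strictly easier shadow of the $\CC\pp^n$ argument already given.
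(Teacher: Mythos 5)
The paper itself does not prove this lemma: it cites it directly from Seidel's book (Lemma 5.11 there) and uses it as a black box in the $\CC\pp^1$ comparison. So the relevant comparison is not with a proof in this paper but with Seidel's own argument, and with the paper's strategy for the analogous Theorem \ref{thm:pnmain}.

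Your proof is correct and is essentially a verbatim shadow of the paper's proof of Theorem \ref{thm:pnmain}: pass to the strictly unital model, build the explicit dual twist $T_V^{\vee}Y=Cone(ev^{\vee})$, identify the long exact sequences to get the adjunction (the c-finiteness entering at $\hom_{D\AA}(Y,V)^{\vee\vee}\cong\hom_{D\AA}(Y,V)$), compute $T_V$ on the spanning class $\{V\}\cup V^{\perp}$ (with the correct answer $T_VV\cong S^{1-n}V$, matching the $\Phi_V\VV\cong S^{-2n}\VV$ computation through the relation $\Phi_V\simeq T_V^2$ and $n'=2n$), invoke Bridgeland's criterion, and close off with the generation argument for essential surjectivity. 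This is internally consistent and exactly what the author would have written had they chosen to reprove the lemma. Seidel's original proof is structured differently: rather than going through spanning classes and Bridgeland's fully-faithfulness criterion (which imports ideas from the algebro-geometric Fourier--Mukai literature), Seidel directly constructs a candidate inverse functor using the dual evaluation map and verifies by an explicit chain-level argument that the composites are quasi-isomorphic to the identity functor, using sphericality to kill the error terms. Your route is cleaner to state and harmonizes better with this paper's own $\CC\pp^n$ argument (and with Huybrechts--Thomas), at the cost of the external appeal to Bridgeland's theorem; Seidel's is more self-contained and gives the inverse explicitly, which is useful elsewhere in his book. Both are valid, and given the context, your choice to model the proof on Theorem \ref{thm:pnmain} is the more natural one.
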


\begin{thm}
When $V$ is a $\CC\pp^1$-object (so is also a spherical object of dimension 2), $T_V^2$ and $\Phi_V$ give rise to isomorphic functors on $D\AA$.
\end{thm}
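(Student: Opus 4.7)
The plan is to show that in $D\AA$, both $\Phi_V Y$ and $T_V^2 Y$ arise as cones of (what will turn out to be) the same natural morphism $\hom(V,Y)[-1] \otimes V \to T_V Y$, yielding a natural isomorphism $\Phi_V \cong T_V^2$ of functors on $D\AA$.

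The key chain-level observation is that $ev \circ H = 0$, where $ev \colon \hom(V,Y) \otimes V \to Y$ is the evaluation and $H = \bar h \otimes e_V - \id \otimes h$ is the map building $\HH_Y$: for each basis morphism $b \in \hom(V,Y)$, the summand $\bar h \otimes e_V$ contributes $\mu^2(b,h)$ via post-composition, while $\id \otimes h$ contributes $\mu^2(b,h)$ via pre-composition, and these cancel. Applying the octahedral axiom to the resulting zero composition $\hom(V,Y)[-2] \otimes V \xrightarrow{H} \hom(V,Y) \otimes V \xrightarrow{ev} Y$ yields an exact triangle $\HH_Y \xrightarrow{(g,\pi)} Y \oplus \hom(V,Y)[-1] \otimes V \to T_V Y$ in $D\AA$, where $g$ is the structure map of $\Phi_V Y$ and $\pi$ is the projection onto the first summand of $\HH_Y$. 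A second application of the octahedral axiom, now to $\HH_Y \xrightarrow{(g,\pi)} Y \oplus \hom(V,Y)[-1] \otimes V \xrightarrow{\mathrm{pr}_Y} Y$ (with total composition $g$ and $Cone(\mathrm{pr}_Y) \cong \hom(V,Y) \otimes V$), produces the exact triangle $T_V Y \to \Phi_V Y \to \hom(V,Y) \otimes V$, whose rotation exhibits $\Phi_V Y$ as the cone of a natural map $\hom(V,Y)[-1] \otimes V \to T_V Y$. Separately, since $V$ is spherical of dimension $2$ we have $T_V V \cong V[-1]$ in $D\AA$, and applying the exact functor $T_V$ to the defining triangle $\hom(V,Y) \otimes V \xrightarrow{ev} Y \to T_V Y$ gives $\hom(V,Y)[-1] \otimes V \to T_V Y \to T_V^2 Y$, identifying $T_V^2 Y$ likewise as the cone of a natural map $\hom(V,Y)[-1] \otimes V \to T_V Y$.

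The main obstacle is to verify that the two natural morphisms $\hom(V,-)[-1] \otimes V \to T_V$ so obtained coincide up to an invertible scalar, so that the two cone presentations agree canonically. I would verify this either by a direct unwinding in the strictly unital twisted-complex setting (tracking the octahedral maps through the formulas of Section \ref{sec:twists}), or by appealing to the spanning class $\{V\} \cup V^\perp$ of Lemma \ref{thm:span}: Proposition \ref{thm:shift} gives $\Phi_V V \cong V[-2]$ and $\Phi_V Y \cong Y$ for $Y \in V^\perp$, while the corresponding facts $T_V^2 V \cong V[-2]$ and $T_V^2 Y \cong Y$ for $Y \in V^\perp$ follow from $T_V V \cong V[-1]$ and $T_V Y \cong Y$ on $V^\perp$. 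Combined with either cone presentation above, this yields a natural transformation between $\Phi_V$ and $T_V^2$ which is an isomorphism on the spanning class; by the Bridgeland-type argument used in Section \ref{sec:equivalence}, this suffices to conclude it is a natural isomorphism of exact functors on $D\AA$.
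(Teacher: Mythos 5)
Your overall strategy is genuinely different from the paper's. The paper constructs an explicit chain-level module homomorphism $\alpha_{\YY}\colon T_V^2\YY \to \Phi_V\YY$, then directly checks that $\mu^1_{\QQ}(\alpha_{\YY})=0$, that each $\alpha_{\YY}$ is a quasi-isomorphism (by a projection argument as in Proposition \ref{thm:shift}), and that the $\alpha_{\YY}$ intertwine $\tilde{\tilde t}$ and $\hat t$ up to the required sign. Your plan instead works in $D\AA$: use $[ev]\circ[H]=0$ and the octahedral axiom to exhibit $\Phi_V Y$ as the cone of a map $\hom(V,Y)[-1]\otimes V\to T_V Y$, then compare with the triangle obtained by applying the exact functor $T_V$ to $\hom(V,Y)\otimes V \xrightarrow{ev} Y \to T_V Y$ together with $T_V V\cong V[-1]$. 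That framework is reasonable (and is close in spirit to how Huybrechts--Thomas treat the $\pp^1$ case), and your observation that in the strictly unital twisted-complex model $\mu^2(ev,H)=0$ already at chain level is correct: writing right-multiplication by $h$ as a matrix in the chosen basis $\{b_i\}$ of $\hom_{Tw\AA}(V,Y)$ shows $\sum_i \beta_i\bar h\otimes b_i=\sum_i\beta_i\otimes\mu^2(b_i,h)$. A cleaner route than your first octahedral step, by the way, is to apply the octahedral axiom once to the composition $\hom(V,Y)\otimes V\xrightarrow{\iota}\HH_Y\xrightarrow{g}Y$ with $g\iota=ev$: the resulting triangle $Cone(\iota)\to Cone(ev)\to Cone(g)\to$ is exactly $\hom(V,Y)[-1]\otimes V\to T_VY\to\Phi_VY\to$.

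However, there is a real gap where you flag ``the main obstacle.'' Having two exact triangles $\hom(V,Y)[-1]\otimes V\to T_VY\to \Phi_VY$ and $\hom(V,Y)[-1]\otimes V\to T_VY\to T_V^2Y$ does not, in a triangulated category, produce an isomorphism $\Phi_VY\cong T_V^2Y$ unless the two left-hand maps agree (up to an automorphism of $T_VY$ commuting with everything in sight), and even then cones are not functorial, so an objectwise isomorphism built from the axiom (TR3) does not automatically assemble into a natural transformation $\Phi_V\Rightarrow T_V^2$. You propose a spanning-class fallback, but Bridgeland's criterion (\cite[Theorem 2.3]{bridgeland}, quoted as the unnumbered theorem in Section \ref{sec:equivalence}) is a full-faithfulness criterion for a \emph{single} exact functor with adjoints; it does not say that two exact functors agreeing objectwise on a spanning class are naturally isomorphic. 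Nor can one argue that $\{V\}\cup V^{\perp}$ \emph{generates} $D\AA$ under shifts and cones (it typically does not), so the ``closed under cones'' trick for a natural transformation restricted to a spanning class is unavailable. To close the argument you would either have to (i) actually carry out the ``direct unwinding'' you mention --- i.e.\ produce an explicit natural cochain map, which is what the paper does via $\alpha_{\YY}$, or (ii) verify that the two maps $\hom(V,-)[-1]\otimes V\to T_V$ are literally equal (at the level of a chosen chain model, or in the functor category $H^0(fun(\AA,\QQ))$), and then invoke TR3 \emph{together with} a functorial-cone argument that you are implicitly but not explicitly supplying by having built both functors as $\ainf$-functors on $\QQ$ (where cones are functorial). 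As written, the proposal identifies the right geometry of the problem but leaves the decisive naturality step unestablished.
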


\begin{proof}
$T_V(T_V \YY) = \left(\YY(V) \otimes \VV(V)[2] \otimes \VV \right) \oplus
\left(\YY(V)[1] \otimes \VV \right) \oplus \left(\YY(V)[1] \otimes \VV \right)
\oplus \YY$ with
\begin{multline*}
\mu_{T^2_V\YY}^1 \left( \begin{array}{cccc}
y_1 \otimes q \otimes v_1\\
y_2 \otimes v_2\\
y_3 \otimes v_3\\
y_4 \end{array}
\right)\\
=
\left( \begin{array}{cccc}
(-1)^{|v_1|+|q|}\mu^1(y_1)\otimes q \otimes v_1 + (-1)^{|v_1|-1}y_1 \otimes \mu^1(q) \otimes v_1 + y_1\otimes q \otimes \mu^1(v_1) \\
(-1)^{|v_2|-1}\mu^1(y_2) \otimes v_2 + y_2 \otimes \mu^1(v_2) + (-1)^{|v_1|-1}\mu^2(y_1, q) \otimes v_1 \\
(-1)^{|v_3|-1}\mu^1(y_3) \otimes v_3 + y_3 \otimes \mu^1(v_3) + y_1\otimes \mu^2(q,v_1)  \\
\mu^1(y_4) + \mu^2(y_2,v_2) + \mu^2(y_3, v_3) + \mu^3(y_1, q, v_1) \end{array}
\right)
\end{multline*}
and
\begin{multline*}
{\mu_{T^2_V\YY}^d \left( \left( \begin{array}{cccc}
y_1 \otimes q \otimes v_1\\
y_2 \otimes v_2\\
y_3 \otimes v_3\\
y_4 \end{array}
\right), \textbf{a}_{d-1} \right)} \\
{= 
\left( \begin{array}{cccc}
y_1 \otimes q \otimes \mu^d(v_1, \textbf{a}_{d-1}) \\
y_2 \otimes \mu^d(v_2, \textbf{a}_{d-1})\\
y_3 \otimes \mu^d(v_3, \textbf{a}_{d-1}) + y_1\otimes \mu^{d+1}(q,v_1,
\textbf{a}_{d-1})  \\
\mu^d(y_4, \textbf{a}_{d-1}) + \mu^{d+1}(y_2,v_2, \textbf{a}_{d-1}) +
\mu^{d+1}(y_3, v_3, \textbf{a}_{d-1}) + \mu^{d+2}(y_1, q, v_1,
\textbf{a}_{d-1}) \end{array}
\right)}
\end{multline*}
for $d\geq2$.

Without loss of generality we may assume that $\VV(V)$ is two-dimensional with basis $\{e_V,h\}$ so that we may write $\YY(V) \otimes \VV(V)[2]$ as a direct sum $e[2]\YY(V) \oplus h\YY(V)$ and denote by $\pi_h$ the projection onto the
second summand (without any correcting sign factor).

For all $\YY$, we now define maps $\alpha_{\YY} \colon T^2_V\YY \to \Phi_V\YY$ by
\[
\alpha_{\YY}^1 \left( \begin{array}{cccc}
y_1 \otimes q \otimes v_1\\
y_2 \otimes v_2\\
y_3 \otimes v_3\\
y_4 \end{array}
\right)
=
\left( \begin{array}{ccc}
(-1)^{|v_1|}\pi_h(y_1 \otimes q) \otimes v_1\\
(-1)^{|y_2|+|v_2|}y_2 \otimes v_2 + (-1)^{|y_3|+|v_3|}y_3 \otimes v_3\\
(-1)^{|y_4|-1}y_4 \end{array}
\right),
\]
and, given $t \in \hom_{\QQ}(\YY,\zz)$, we now have the diagram
\begin{equation*}
\xymatrix{
{T^2_V\YY} \ar[r]^{\tilde{\tilde{t}}} \ar[d]^{\alpha_{\YY}} & {T^2_V\zz} \ar[d]^{\alpha_{\zz}} \\
{\Phi_V\YY} \ar[r]^{\hat{t}}  & {\Phi_V\zz,}
}
\end{equation*}
and the following are easily checked:
\begin{itemize}
\item $\mu_{\QQ}^1\left(\alpha_{\YY}\right)=0$ for all $\YY$;
\item By a similar argument to the proof of Proposition \ref{thm:shift}, $\alpha_{\YY}$ is a quasi-isomorphism for all $\YY$;
\item $(-1)^{|\tilde{\tilde{t}}|}\mu_{\QQ}^2\left(\alpha_{\zz},\tilde{\tilde{t}}\right) = (-1)^{|\alpha_{\YY}|}\mu_{\QQ}^2\left(\hat{t},\alpha_{\YY}\right)$.
\end{itemize}
This suffices to prove that there is a natural isomorphism between the two functors in $D\AA$.
\end{proof}

\begin{cor}
In light of \eqref{eqn:symp}, Conjecture \ref{thm:conj} holds in the case of a $\CC\pp^1$-object.
\end{cor}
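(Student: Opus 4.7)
The plan is to chain together three known correspondences to obtain the claim. Observe first that a Lagrangian $V \cong \CC\pp^1$ is diffeomorphic to $S^2$, so when $V$ is viewed as an object of $\FF(M,\o)$ it is simultaneously a $\CC\pp^1$-object and a spherical object of dimension $2$ (its Floer cohomology $HF^*(V,V)$, being isomorphic as a ring to $H^*(\CC\pp^1;\KK) \cong H^*(S^2;\KK)$, satisfies both definitions). So both the algebraic projective twist $\Phi_V$ and the spherical twist $T_V$ are defined on $D\FF(M,\o)$.

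First I would invoke Seidel's theorem comparing the geometric Dehn twist with the algebraic spherical twist: for any Lagrangian $L$, $\tau_V L$ and $T_V L$ give isomorphic objects of $D\FF(M,\o)$. Applying this twice (or equivalently, applying it to $\tau_V L$ and then to $L$, using the fact that this correspondence is compatible with composition up to isomorphism in the auto-equivalence group) yields that $\tau_V^2 L$ and $T_V^2 L$ give isomorphic objects in $D\FF(M,\o)$.

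Next I would use the geometric identity \eqref{eqn:symp}, namely $\tau_V^2 = \phi_V$ in $\mathit{Aut}^c(M,\o)/\mathit{Ham}(M,\o)$. Since the map \eqref{eqn:symptofuk} is well-defined modulo Hamiltonian isotopy, this equality passes to $D\FF(M,\o)$, so $\tau_V^2 L$ and $\phi_V L$ are isomorphic objects there. Combining with the previous step gives $\phi_V L \cong T_V^2 L$ in $D\FF(M,\o)$.

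Finally, by the theorem just proved, there is a natural isomorphism $T_V^2 \cong \Phi_V$ as functors on $D\AA$ for any c-unital triangulated $\ainf$-category $\AA$ containing a $\CC\pp^1$-object $V$; specialising to $\AA = \FF(M,\o)$ yields $T_V^2 L \cong \Phi_V L$ as objects of $D\FF(M,\o)$. Concatenating the two isomorphisms gives $\phi_V L \cong \Phi_V L$, which is Conjecture \ref{thm:conj} for $n=1$. The only step that requires any genuine input is the appeal to Seidel's comparison result; once that and \eqref{eqn:symp} are granted, the remainder is purely formal.
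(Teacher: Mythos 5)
Your argument is correct and is precisely the chain the paper intends: the paper leaves the corollary's proof implicit, but the introduction spells out exactly this reduction (Seidel's Dehn twist vs.\ spherical twist comparison, the geometric identity $\tau_V^2 = \phi_V$, and the theorem immediately preceding the corollary identifying $T_V^2$ with $\Phi_V$). You have simply made explicit the concatenation of isomorphisms that the author omits, including the minor point that Seidel's object-level comparison must be applied functorially to pass from $\tau_V L \cong T_V L$ to $\tau_V^2 L \cong T_V^2 L$.
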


\subsection{\texorpdfstring{Non-surjectivity of \eqref{eqn:symptofuk}}{Non-surjectivity of (1.1)}}

Suppose we have a symplectic manifold $\mo$ and a Lagrangian $V \subset M$ which satisfies the classical ring isomorphism $HF^*(V,V) \cong H^*(V)$.  Then if $V$ has the same cohomology ring as $\CC\pp^n$ we can form the projective twist $\Phi_V$ of $D\FF(M)$ even if $V$ is not itself diffeomorphic to $\CC\pp^n$.  However, in this case we would not expect to find a geometric representative of $\Phi_V$ as we do not expect to find a metric on $V$ with periodic geodesic flow.  We shall prove that there are indeed situations as above where no such geometric twist exists.  The argument in this section is very similar to that in \cite[Proposition 2.17]{abouzaidsmithplumbings} and we refer the reader there for a more precise discussion of the technical issues underpinning the definition of the Fukaya category in this situation. 

Take some manifold $V$ such that $H^*(V) \cong k[h]/h^{n+1}$ as a ring but such that $\pi_1(V)$ is nontrivial (for example we could take the connect sum of $\CC\pp^n$ and some homology sphere $\Sigma^{2n}$).  Consider the disc cotangent bundle $D^*V$ and add a Weinstein handle \cite{weinsteinhandle} to cap off the Legendrian $S^{2n-1}$ bounding some cotangent fibre.  The result is an exact symplectic manifold $M=D^*V \# D^*S^{2n} $, which contains Lagrangians $Y \cong S^{2n}$ and $V$, and results of \cite{abouzaidsmithplumbings} say that (for some suitable definition of the Fukaya category) $\FF(M)$ is generated (not merely split-generated) by these two Lagrangians.  Moreover, here we have the identity $HF^*(V,V) \cong H^*(V)$.

\begin{prop}
In this situation there is no geometric representative $\phi_V$ of $\Phi_V$.
\end{prop}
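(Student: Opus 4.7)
The strategy is by contradiction, closely modelled on \cite[Proposition 2.17]{abouzaidsmithplumbings}. Suppose some $\phi_V \in Aut^c \mo / Ham \mo$ were a geometric representative of $\Phi_V$ in the sense of Conjecture \ref{thm:conj}. Any such representative is in particular a diffeomorphism of $M$, so $\phi_V(Y)$ is a smoothly embedded copy of $S^{2n}$ in $M$; in particular it is simply connected and represents the trivial conjugacy class in $\pi_1(M)$. The aim is to show that the algebraic twist $\Phi_V Y$ admits no Lagrangian representative with these smooth-topological features.

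The first step is to analyse $\Phi_V Y$ via the twisted complex \eqref{eqn:twistedcxphi}. Since $V$ and $Y$ arise from the plumbing construction, they meet transversely at a single point, so $HF^*(V,Y)$ is one-dimensional and the evaluation map $ev$ in \eqref{eqn:twistedphi} is non-zero on cohomology. Combined with the non-vanishing shift identity of Proposition \ref{thm:shift}, this shows that $V$ enters essentially into $\Phi_V Y$: more precisely, the class $[\Phi_V Y] \in K_0(D\FF(M))$ has a non-trivial component in $[V]$, and $\Phi_V Y$ cannot be isomorphic in $D\FF(M)$ to any twisted complex built out of $[Y]$ alone.

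The next step is to convert this algebraic presence of $V$ into a topological obstruction on any Lagrangian representative. I would pass to the cover $\widetilde{M} \to M$ associated to the normal closure of $\pi_1(V)$ in $\pi_1(M)$; on this cover $V$ lifts non-trivially (reflecting its non-trivial fundamental group) while the simply-connected $Y$ lifts as a disjoint union of copies indexed by the deck transformation group. Lifting the Fukaya-theoretic input and tracking the $[V]$-component of $\Phi_V Y$ across the cover would force any Lagrangian representative of $\Phi_V Y$ to have a non-trivial $\pi_1$-class in $M$, which is incompatible with $\phi_V(Y)$ being a smoothly embedded sphere.

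The main obstacle is the last step: one must lift \eqref{eqn:twistedcxphi} to $\widetilde{M}$ compatibly with the deck transformations and verify that the downstairs $K_0$-invariant genuinely distinguishes simply-connected Lagrangians from $\Phi_V Y$, following the model of \cite{abouzaidsmithplumbings}. Once this is in place, the contradiction with $\phi_V(Y) \cong S^{2n}$ is immediate and the proposition follows.
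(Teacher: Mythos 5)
Your coarse strategy (argue by contradiction and pass to a cover adapted to $\pi_1(V)$) is the same as the paper's, but the mechanism you propose does not actually produce a contradiction, and several essential ingredients of the paper's argument are missing.

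The step that fails is the attempt to extract an obstruction from $K_0$ and from $\pi_1$ of the putative Lagrangian representative. Knowing that $[\Phi_V Y] \in K_0(D\FF(M))$ has a nonzero $[V]$-component puts no constraint on the topology of a Lagrangian $L$ with $L \cong \Phi_V Y$ in $D\FF(M)$: the $K_0$-class of a Lagrangian is a Floer-theoretic invariant, not a topological one, and there is no mechanism in your sketch by which ``tracking the $[V]$-component across the cover'' forces $L$ to have non-trivial $\pi_1$. Indeed the paper's argument never uses that $\phi_V(Y)$ is simply connected; it rules out \emph{any} embedded Lagrangian representative $L$ by analysing the structure of its total preimage $\pi^{-1}(L)$, not its fundamental group.

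What is genuinely missing is the characteristic trick and the Krull--Schmidt step. The paper fixes the coefficient field $\KK$ so that $\mathrm{char}(\KK)$ divides the degree of the universal cover $\iota\colon \widetilde V \to V$. Pulling back the twisted complex $V \xrightarrow{h} V[1] \to Y$ for $\Phi_V Y$ along $\pi\colon \widetilde M \to M$ then kills the differential $h$, so that $\pi^{-1}(L)$ \emph{splits} as $\widetilde V \oplus \bigl(\widetilde V[1] \to \pi^{-1}(Y)\bigr)$. On the other hand, for a geometric Lagrangian $L$ the preimage $\pi^{-1}(L)$ is a disjoint union of connected components, each indecomposable (by a rank computation of $HW^0$) and all related by deck transformations. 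A Krull--Schmidt argument in the idempotent completion $D^{\pi}\WW(\widetilde M;\pi)$ then forces $\widetilde V$ to be one of these components, hence forces the remaining summand $\widetilde V[1] \to \pi^{-1}(Y)$ to be a direct sum of deck translates of $\widetilde V$; this is ruled out by computing Floer cohomology against a cotangent fibre to a component of $\pi^{-1}(Y)$. None of these steps appear in your sketch, and without them the contradiction does not materialise.
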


We first fix the coefficient field $\KK$ we shall use to define our Fukaya category: let $\iota \colon \widetilde{V} \to V$ denote the universal cover and fix some $\KK$ such that $char(\KK)$ divides the index of $\iota$ (so that $char(\KK)$ is arbitrary when the index is infinite). Now suppose that such a geometric morphism $\phi_V$ exists.  Then there will be a Lagrangian submanifold $L=\phi_V(Y)$ which is represented by the twisted complex
\begin{equation}
\label{eqn:thing}
\xymatrixcolsep{.2in}
\begin{xymatrix}
{V  \ar[r]^{h} &V[1]  \ar[r]^{x} & Y,}
\end{xymatrix}
\end{equation}
where the arrows denote the terms in the differential as in \eqref{eqn:twistedcxphi} (if necessary we pass to a quasi-equivalent, strictly unital $\widetilde{\FF}(M)$ so that we may work with twisted complexes as in Section \ref{sec:equivalence}).  Here we observe that $HF^*(V,Y) = \KK$ generated by their one point of intersection $x$.  The objects of $\FF(M)$ are all closed Lagrangians, but $\FF(M)$ embeds as a full category of some \emph{wrapped Fukaya category} $\WW(M)$, which includes nonclosed Lagrangians such as cotangent fibres.  Let $\pi \colon \widetilde{M} \to M$ be the cover induced by $\iota \colon \widetilde{V} \to V$.  Results of \cite[Section 6]{abouzaidmaslov} now say that there exists a pullback Fukaya category $\mathcal{W}(\widetilde{M};\pi)$ with the following properties:

\begin{thm}
There is a wrapped Fukaya category $\mathcal{W}(\widetilde{M};\pi)$ which comes with a pullback
functor
\[
\pi^* \colon \mathcal{W}(M) \to \mathcal{W}(\widetilde{M} ; \pi)
\]
which acts on objects $L$ of $\mathcal{W}(M)$ by taking the total inverse image $\pi^{-1}(L) \subset \widetilde{M}$ and such
that the map on morphisms
\[
HF^*(L,L) \to HF^*(\pi^{-1}(L),\pi^{-1}(L))
\]
agrees with the classical pullback on cohomology whenever $L \subset M$ is closed. Moreover, deck
transformations of $\pi$ act by autoequivalences of $\mathcal{W}(\widetilde{M} ;\pi)$.
\end{thm}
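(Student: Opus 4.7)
The plan is to construct $\mathcal{W}(\widetilde{M};\pi)$ as a version of the wrapped Fukaya category built from Lagrangians in $\widetilde{M}$ that are compatible with the covering, and then promote the set-theoretic pullback $L \mapsto \pi^{-1}(L)$ to an $\ainf$-functor by lifting the holomorphic curve data that defines $\mathcal{W}(M)$.

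First I would fix the class of objects: take admissible (exact, conical at infinity) Lagrangians in $\widetilde{M}$ whose images under $\pi$ are admissible objects of $\mathcal{W}(M)$, and more generally deck-translates of such. The morphism spaces are wrapped Floer cochains in $\widetilde{M}$, defined using a pullback almost complex structure $\tilde{J} = \pi^*J$ and a pullback Hamiltonian $\tilde{H} = \pi^*H$ quadratic at infinity; these are well defined because $\pi$ is a local symplectomorphism and wrapping data can be pulled back coordinatewise. The key structural observation is that, because $\tilde{J}$ and $\tilde{H}$ are $\pi$-equivariant, every Floer trajectory in $M$ lifts uniquely once a lift of one endpoint is fixed, and the total space of lifts realises $\pi^{-1}$ of the moduli space. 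This lets me define $\pi^*$ on morphisms by the tautological lifting: a generator $x \in CF^*(L_0,L_1)$ pulls back to the formal sum over the lifts $\tilde{x} \in CF^*(\pi^{-1}L_0, \pi^{-1}L_1)$. The $\ainf$-structure maps lift identically, so this defines an $\ainf$-functor.

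To identify the map on morphisms with the classical pullback when $L$ is closed, I would invoke the standard quasi-isomorphism $CF^*(L,L) \simeq C^*(L)$ (via a Morse or PSS-type model) and check that under this identification the lift-of-moduli definition above coincides with the pullback of singular cochains along $\pi|_{\pi^{-1}L}\colon \pi^{-1}L \to L$. This is a naturality argument: both constructions agree with counting pseudo-gradient flow lines of a pulled-back Morse function on the pulled-back Lagrangian. For the deck action, given $g \in \mathrm{Deck}(\pi)$, the symplectomorphism $g \colon \widetilde{M} \to \widetilde{M}$ permutes the chosen Lagrangians and their intersection points, and its action on Floer data commutes with $\tilde{J}$ and $\tilde{H}$ by construction, so push-forward by $g$ assembles into an $\ainf$-autoequivalence of $\mathcal{W}(\widetilde{M};\pi)$ in the evident way.

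The main obstacle is transversality and compactness for the wrapped theory in the noncompact cover: one must show that the pulled-back Floer and perturbation data remain regular (or can be made so by equivariant perturbations) and that the moduli of holomorphic polygons with boundary on preimages of compact or admissible Lagrangians do not escape to infinity in $\widetilde{M}$. For objects that come from closed Lagrangians this follows from closedness of each connected component of $\pi^{-1}L$; for noncompact admissible objects (e.g.\ lifts of cotangent fibres) it requires a maximum-principle argument using that $\pi^*H$ is still quadratic at infinity on each end of $\widetilde{M}$. Once these analytic points are handled uniformly across the deck group, the equivariance and the identification with classical pullback follow formally.
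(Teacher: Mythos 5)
The paper does not actually prove this theorem. It is quoted as an external result from Section~6 of Abouzaid's paper on nearby Lagrangians with vanishing Maslov class (the reference labeled \texttt{abouzaidmaslov}), and it is used in the present paper purely as a black box to feed the proposition that follows. There is therefore no internal proof for your sketch to be compared against.

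Regarding the sketch itself: it captures the expected outline (pull back Floer data so that it is $\pi$-equivariant, lift moduli spaces by unique path lifting, let deck transformations act on the lifted data), but it elides the genuinely delicate point that Abouzaid's construction is designed to handle, namely that the cover may be infinite. In the application in this paper, $\pi \colon \widetilde{M} \to M$ is the cover associated to the universal cover $\iota \colon \widetilde{V} \to V$, and $\pi_1(V)$ is allowed to be infinite (the paper explicitly treats this case when fixing the coefficient field $\KK$). Then $\pi^{-1}(L)$ for a closed Lagrangian $L$ can have infinitely many components, and your proposed definition of $\pi^*$ on morphisms as ``the formal sum over all lifts of a generator'' is an infinite sum that does not land in an ordinary wrapped Floer cochain group. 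Making sense of this requires precisely the extra structure that distinguishes $\mathcal{W}(\widetilde{M};\pi)$ from the naive wrapped Fukaya category of $\widetilde{M}$: one needs either a completion, a group-ring/equivariant packaging, or the specific ``category relative to the cover'' framework that Abouzaid builds, in which the morphism spaces are organized so that the $\ainf$-operations and the pullback functor are well-defined. Your closing paragraph about transversality and compactness in the noncompact cover addresses a real but secondary issue (each holomorphic curve in $\widetilde{M}$ has compact image, so it meets only finitely many sheets and the structure maps count finitely many curves); the primary content of the cited theorem is the algebraic setup that makes $\pi^*$ a well-defined $\ainf$-functor even when the covering group is infinite, and that part is missing from the sketch.
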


So when we pullback the twisted complex \eqref{eqn:thing} under $\pi$, we get a new twisted complex in $\WW(\widetilde{M};\pi)$:
\[
\xymatrixcolsep{.2in}
\begin{xymatrix}
{\widetilde{V} \ar[r]^{0} &\widetilde{V}[1] \ar[r] & \pi^{-1}(Y),}
\end{xymatrix}
\]
where the first differential is zero by our choice of $\KK$.  This means that, up to shifts, we get the splitting
\begin{equation}
\label{eqn:splitting}
\pi^{-1}(L) \cong \widetilde{V} \oplus \left(\widetilde{V}[1] \to \pi^{-1}(Y) \right).
\end{equation}
Also, $\pi^{-1}(L) = \coprod_{\alpha} \widetilde{L}_{\alpha}$ where all the components are related in $\mathcal{W}(\widetilde{M} ;\pi)$ by deck transformations of $\pi$.  By looking at the rank of $HW^0(\widetilde{V},\widetilde{V})=HF^0(\widetilde{V},\widetilde{V})=\KK$ we see that $\widetilde{V}$ is an indecomposable object of the category, as is each $\widetilde{L}_{\alpha}$.

We now work in $D^{\pi}\WW(\widetilde{M};\pi)$, the idempotent completion of $D\WW(\widetilde{M};\pi)$ \cite[Chapter 4]{seidelbible}, where we can appeal to the following lemma.

\begin{lem}
If $X=\oplus X_i$ is a direct sum of indecomposable objects in $D^{\pi}\WW(\widetilde{M};\pi)$ and $Y$ is a indecomposable summand of $X$, then $Y$ must be isomorphic to one of the $X_i$.
\end{lem}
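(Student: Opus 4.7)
The plan is to apply a standard Krull--Schmidt style uniqueness argument in the idempotent complete triangulated category $D^{\pi}\WW(\widetilde{M};\pi)$. The key input will be the locality of the endomorphism ring of the indecomposable object $Y$, which is what guarantees that direct sum decompositions into indecomposables are essentially unique up to permutation.

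First I would set up the inclusion and projection coming from the hypothesis that $Y$ is a direct summand of $X$: there exist morphisms $\iota \colon Y \to X$ and $p \colon X \to Y$ with $p \circ \iota = \id_Y$. The decomposition $X = \bigoplus_i X_i$ yields component morphisms $\iota_i \colon Y \to X_i$ and $p_i \colon X_i \to Y$ satisfying $\id_Y = \sum_i p_i \iota_i$ in $\End(Y)$.

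The central step is to argue that $\End(Y)$ is local. In the idempotent completion, indecomposability of $Y$ is equivalent to $\End(Y)$ having no nontrivial idempotents, since any such idempotent would split off a nonzero direct summand. In the geometric situation here the endomorphism rings of the indecomposable objects at issue are finite-dimensional $\KK$-algebras (for the compact components $\widetilde{L}_\alpha$ this is standard, and for $\widetilde{V}$ it is the explicit identification $HW^0(\widetilde{V},\widetilde{V}) \cong \KK$ recorded just above the lemma), and any finite-dimensional $\KK$-algebra without nontrivial idempotents is local. Consequently not all of the $p_i \iota_i$ can lie in the unique maximal ideal, so $p_j \iota_j$ is a unit in $\End(Y)$ for some $j$.

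With such a $j$ fixed, $\iota_j$ becomes a split monomorphism with retraction $r := (p_j \iota_j)^{-1} p_j$, and the composite $e := \iota_j \circ r \in \End(X_j)$ is an idempotent. Indecomposability of $X_j$ forces $e \in \{0, \id_{X_j}\}$, and the relation $e \circ \iota_j = \iota_j \neq 0$ rules out $e=0$, so $e = \id_{X_j}$ and $\iota_j$ is an isomorphism $Y \cong X_j$. The main obstacle in this argument is pinning down the locality of $\End(Y)$: in general wrapped Fukaya morphism spaces need not be finite-dimensional, so one cannot invoke a blanket finiteness hypothesis on the category and must instead appeal to the concrete computations of endomorphism algebras available for the particular objects $\widetilde{V}$ and $\widetilde{L}_\alpha$ that appear here.
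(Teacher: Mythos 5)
You take a genuinely different route from the paper. Yours is the classical Krull--Schmidt argument localized at $Y$: write $\id_Y = \sum_i p_i\iota_i$ in $\End(Y)$, use locality of $\End(Y)$ to extract a unit $p_j\iota_j$, and then the idempotent $\iota_j(p_j\iota_j)^{-1}p_j \in \End(X_j)$ together with indecomposability of $X_j$ forces $\iota_j$ to be an isomorphism $Y \cong X_j$. The paper instead works in $\End(X)$: it observes that $\iota p$ is idempotent, asserts that this ``splits as a direct sum of idempotents $X_i \to Y \to X_i$,'' and then argues from indecomposability of the $X_i$ and of $Y$. That asserted splitting is the delicate point in the paper's version, since the diagonal block $q_i\,\iota p\,j_i$ of the idempotent $\iota p$ need not itself be idempotent (one would need $p\,j_i q_i\,\iota = \id_Y$, which is essentially what one is trying to prove), so the paper's proof is more compressed than yours, whereas your chain of implications is airtight once locality is granted. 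The price, which you correctly identify as the real content, is that indecomposability in an idempotent-complete category only shows $\End(Y)$ has no nontrivial idempotents, and this does not imply locality without a finiteness input (compare $\ZZ$); your recourse to the concrete computations of endomorphism algebras for $\widetilde{V}$ and the compact components $\widetilde{L}_\alpha$ is the right fix for the objects actually in play, though it does mean that, strictly speaking, neither proof establishes the lemma at the full generality of all indecomposables of $D^\pi\WW(\widetilde{M};\pi)$, whose hom-spaces can be infinite-dimensional.
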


\begin{proof}
By considering inclusion and projection morphisms, we see that the composition $X \to Y \to X$ is idempotent.  This splits as a direct sum of idempotents $X_i \to Y \to X_i$.  When one of these is nonzero it means that, either the composition is the identity or that, having taken idempotent completion, $X_i$ admits a nontrivial summand.  In the first instance, $Y \to X_i \to Y$ is then idempotent, so again the composition is either the identity or $Y$ admits a nontrivial decomposition.  As $X_i$ and $Y$ are assumed indecomposable, we conclude that $X_i$ and $Y$ must be isomorphic. 
\end{proof}

Therefore, in order to show that the twisted complex in the right-hand side of \eqref{eqn:splitting} cannot arise as the pullback of a geometric Lagrangian and that therefore $\phi_V$ cannot exist, it suffices to prove

\begin{lem}
$\widetilde{V}[1] \to \pi^{-1}(Y)$ is not quasi-isomorphic in $\WW(\widetilde{M};\pi)$ to a direct sum of objects obtained from $\widetilde{V}$ by deck transformations.
\end{lem}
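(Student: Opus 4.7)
The plan is to derive a contradiction via an Euler characteristic computation. Suppose for contradiction that $Z := (\widetilde V[1] \to \pi^{-1}(Y))$ is quasi-isomorphic in $\WW(\widetilde M; \pi)$ to a finite direct sum $\bigoplus_{i=1}^m \widetilde V[k_i]$; since $\widetilde V$ is set-wise invariant under the deck group $G$ of $\pi$, each deck-translate of $\widetilde V$ is isomorphic to $\widetilde V$ as an object, so it suffices to rule out sums of this form.

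First I would apply the cohomological functor $HF^*(-,\widetilde V)$ to both sides. The pullback theorem identifies $HF^*(\widetilde V,\widetilde V) \cong H^*(\widetilde V;\KK)$, so the right-hand side has Euler characteristic $\chi(\widetilde V) \sum_i (-1)^{k_i}$. For the left-hand side, using the exact triangle $\widetilde V[1] \to \pi^{-1}(Y) \to Z$ and additivity of Euler characteristic,
\[
\chi(HF^*(Z,\widetilde V)) = \chi(HF^*(\pi^{-1}(Y),\widetilde V)) - \chi(HF^*(\widetilde V[1],\widetilde V)) = |G| + \chi(\widetilde V),
\]
where the middle term is computed from $HF^*(\pi^{-1}(Y),\widetilde V) = \bigoplus_{g \in G} HF^*(gY,\widetilde V) \cong \KK[G]$, concentrated in degree $2n$. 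Indeed, each component $gY$ meets $\widetilde V$ transversely in a single point, $G$ permutes these intersection points freely as the regular representation, and the degree $2n$ comes from Poincar\'e duality for Floer cohomology applied to $HF^0(V,Y)\cong\KK$.

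When $|G|<\infty$, Euler characteristic is multiplicative under the finite covering $\widetilde V \to V$, so $\chi(\widetilde V) = |G|\chi(V)$. Equating the two Euler characteristics and dividing by $|G|$ reduces to the integer equation
\[
\chi(V)\left(\textstyle\sum_{i=1}^m (-1)^{k_i} - 1\right) = 1.
\]
Since $H^*(V) \cong \KK[h]/h^{n+1}$ is concentrated in even degrees, $\chi(V) = n+1 \geq 2$, so this equation has no integer solution. When $|G| = \infty$, the identification already shows $HF^{2n}(\pi^{-1}(Y),\widetilde V)$ is infinite-dimensional, and the long exact sequence forces $HF^{2n}(Z,\widetilde V)$ to be infinite-dimensional, contradicting finite-dimensionality of the graded pieces of a finite direct sum $\bigoplus_i H^*(\widetilde V)[-k_i]$.

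The main step requiring verification is the identification $HF^*(\pi^{-1}(Y),\widetilde V) \cong \KK[G]$ in degree $2n$ as a $G$-representation. This follows from the single transverse intersection of $V$ and $Y$ in $M$ combined with the pullback theorem; once it is in hand, the numerical Euler characteristic argument is straightforward.
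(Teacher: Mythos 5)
Your approach is genuinely different from the paper's. The paper's proof is a one-liner using a test object: pick a cotangent fibre $F$ over a point of one component of $\pi^{-1}(Y)$; then $HF^*(F,\widetilde V)=0$ (so any direct sum of shifted deck-translates of $\widetilde V$ has vanishing $HF^*(F,-)$), while the exact triangle $\widetilde V[1]\to\pi^{-1}(Y)\to Z$ gives $HF^*(F,Z)\cong HF^*(F,\pi^{-1}(Y))\cong\KK$ from the single transverse intersection. This works uniformly, with no case distinction, no Euler characteristics, and no knowledge of $H^*(\widetilde V)$ beyond what is already used elsewhere in the argument.

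Your Euler characteristic computation is sound and gives the result when $|G|<\infty$: the identity $(\sum_i(-1)^{k_i}-1)(n+1)=\pm 1$ indeed has no integer solutions since $n+1\geq 2$ (the sign ambiguity you left in the degree of the intersection point is harmless). However, the $|G|=\infty$ branch has a real gap. You invoke the pullback theorem to assert $HF^*(\widetilde V,\widetilde V)\cong H^*(\widetilde V;\KK)$, but the theorem as stated only says the pullback map agrees with the classical pullback when the Lagrangian downstairs is \emph{closed}; it does not compute $HF^*(\widetilde V,\widetilde V)$ outright when $\widetilde V$ is non-compact, nor does it guarantee that the graded pieces of $HF^*(\widetilde V,\widetilde V)$ are finite-dimensional. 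Both facts are used in your long exact sequence argument: you need $HF^{2n}(\widetilde V[1],\widetilde V)$ and $HF^{2n-1}(\widetilde V[1],\widetilde V)$ finite-dimensional to conclude $HF^{2n}(Z,\widetilde V)$ is infinite-dimensional, and you need $H^*(\widetilde V)$ degreewise finite-dimensional for the final contradiction. Neither is justified in the non-compact setting. So your proof as written does not cover the infinite cover case, which the paper explicitly allows (it takes $\KK$ of arbitrary characteristic precisely when the index of the cover is infinite). The cotangent fibre argument sidesteps all of this, which is why the paper uses it.
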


\begin{proof}
Pick a cotangent fibre to one of the components of $\pi^{-1}(Y)$ and consider its Floer cohomology with these two twisted complexes.  In the case of $\widetilde{V}$ the rank will be zero; in the case of $\widetilde{V}[1] \to \pi^{-1}(Y)$ the rank will be 1.
\end{proof}

\begin{rmk}
\label{rmk:relspin}
This argument requires that we may freely choose our coeffiecient field for $\FF(M)$.  To do this one usually restricts attention to \emph{spin} Lagrangians so that we can orient the moduli spaces of holomorphic curves used to define our $\ainf$-maps.  However, following \cite{fooo1}, it is enough that our Lagrangians be \emph{relatively spin}, meaning that there is some class $st \in H^*(M,\ZZ/2)$ such that $st|_L = w_2(L)$, which clearly holds here.  Therefore the above argument will still work in the case where $n$ is even.
\end{rmk}

\end{document}